\theoremstyle{definition}
\def\StrangeCross
\def\bea{\begin{eqnarray}}
\def\eea{\end{eqnarray}}
\newtheorem{theorem}{Theorem}[section]
\newtheorem{lemma}[theorem]{Lemma}
\newtheorem{proposition}[theorem]{Proposition}
\newtheorem{problem}[theorem]{Problem}
\newenvironment{exafont}{\begin{bf}}{\end{bf}}
\theoremstyle{definition}
\newtheorem{definition}[theorem]{Definition}
\newtheorem{remark}[theorem]{Remark}
\newtheorem{example}[theorem]{Example}
\newcommand{\Mat}{\operatorname{Mat}}
\begin{document}

\author[V. Gorbounov]{V.~Gorbounov}
\address{V.~G.: Faculty of Mathematics, National Research University Higher School of Economics, Usacheva 6, 119048 Moscow, Russia}
\email{vgorb10@gmail.com }

\author[A. Kazakov]{A.~Kazakov}
\address{A.~K.:  Lomonosov Moscow State University, Faculty of Mechanics and Mathematics, Russia, 119991, Moscow, GSP-1, 1 Leninskiye Gory, Main Building; Centre of Integrable Systems, P. G. Demidov Yaroslavl State University, Sovetskaya 14, 150003, Yaroslavl, Russia;
Center of Pure Mathematics, Moscow Institute of Physics and Technology, 9 Institutskiy per., Dolgoprudny, Moscow Region, 141701, Russian Federation;
Kazan Federal University, N.I. Lobachevsky Institute of Mathematics and Mechanics,  Kazan, 420008, Russia}
\email{anton.kazakov.4@mail.ru}
\title{Electrical networks and data analysis in phylogenetics}

\maketitle

\begin{abstract}

A classic problem in data analysis is studying the systems of subsets defined by either a similarity or a dissimilarity function on $X$ which is either observed directly or derived from a data set.

For an electrical network there are two functions on the set of the nodes defined by the resistance matrix and the response matrix either of which defines the network completely.
We argue that  these functions should be viewed as a similarity and a dissimilarity function on the set of the nodes moreover they are related via the covariance mapping also known as the Farris transform or the Gromov product. We will explore the properties of electrical networks from this point of view.

It has been known for a while that the resistance matrix defines a metric on the nodes of the electrical networks. Moreover for a circular electrical network this metric obeys the Kalmanson property as it was shown recently. We will call such a metric an electrical Kalmanson metric. 
 The main results of this paper is a new description of the electrical Kalmanson metrics in the set of all Kalmanson metrics in terms of the geometry of the positive Isotropic Grassmannian whose connection to the theory of electrical networks was discovered earlier. 

One important area of applications where Kalmanson metrics are actively used is the theory of phylogenetic networks which are a generalization of phylogenetic trees. Our results allow us to use in phylogenetics the powerful methods of reconstruction of the minimal graphs of electrical networks and possibly open the door into data analysis for the methods of the theory of cluster algebras.
\end{abstract}
\setcounter{tocdepth}{3}
\tableofcontents
MSC2020: 14M15, 82B20, 05E10, 05C50, 05C10, 92D15, 94C15, 90C05, 90C59, 05C12.

Key words: Electrical networks, circular split metrics, Kalmanson metrics

\section{Introduction}
The theory of electrical networks goes back to the work of Gustav Kirchhoff around mid 1800 and since then it has been a source of remarkable achievements in combinatorics, algebra, geometry, mathematical physics, and electrical engineering. 

An electrical network is a graph with positive weights, conductances attached to the edges, and a chosen subset of the set of vertices which are called the boundary vertices or nodes. An important characteristic of an electrical network with only two nodes $i$ and $j$ is the effective resistance $R_{ij}$, that is the voltage at node $i$ which, when node $j$ is held at zero volts, causes a unit current to flow through the circuit from node $i$ to node $j$. The effective resistance defines a metric on the set of nodes that is widely used in chemistry, for example, \cite{Kl}. 
For convenience, we will organize the effective resistance $R_{ij}$ in a matrix $R$ setting $R_{ii}=0$ for all $i$.

In the case where there are more than two nodes, there is an important generalization of $R_{ij}$. Given an electrical network with $n$ nodes, there is a linear endomorphism of the vector space of functions defined on the nodes, constructed as follows: for each such a function, there is a unique extension of that function to all the vertices which satisfies Kirchhoff's current law at each interior vertex. This function then gives the current $I$ in the network at the boundary vertices defining a linear map which is called the Dirichlet-to-Neumann map or the network response. The matrix of this map is called the response matrix. It plays a key role in the theory and applications of the electrical network \cite{CIM}.

The above two matrices define each other and moreover it is possible to reconstruct a planar circular electrical network if these matrices are known \cite{CIM}.

A classic problem in data analysis is studying the systems of subsets defined by either a similarity or a dissimilarity function on $X$ which is either observed directly or derived from a data set.
While the latter makes use of splits and split metrics, the key ingredients of the former are systems of clusters, subsets of $X$, and elementary similarity functions. One can interpret splits as distinctive features and clusters as common features, see \cite{GandC} and \cite{MS} for an introduction to these ideas.

We argue that  the "inverse" of the response and the resistance matrices should be viewed as a similarity and a dissimilarity function on the set of nodes of an electrical network and as such they are related via the covariance mapping also known as the Farris transform or the Gromov product, see Remark \ref{inverse} for more details. 

We will explore the properties of electrical networks from this point of view.
In this paper we will work with the resistance matrix. The connection of the response matrix to data analysis will be presented in a future publication.

In computational biology species are naturally assigned collections of symbols from a given set, called characters, and one can construct a distance between two species (such as Hamming distance) which records the proportion of characters where the two species differ. Such a record can be encoded by a real symmetric, nonnegative matrix called a dissimilarity matrix $M$. An important problem in phylogenetics is to reconstruct a weighted tree $T$ with the set of leaves equal to the set of of species and the matrix of the tree metric defined by $T$ on the set of leaves is equal to $M$, see \cite{MS} for a nice introduction to these ideas.
 In most cases, a tree structure is too constraining. The notion of a split network is a generalization of a tree in which a certain type of cycles is allowed. A geometric space of such networks was introduced \cite{DP}, forming a natural extension of the work by Billera, Holmes, and Vogtmann on tree space.  
It has been studied from different points of view since it is related to a number of objects in mathematics: the compactification of the real moduli space of curves, to the Balanced Minimal Evolution polytopes and Symmetric Traveling Salesman polytopes to name a few.

The appropriate metric on the set of species defined by a split network has a very special property called the Kalmanson property which distinguishes it completely in the set of all metrics \cite{BD1}, \cite{MS}.

Stefan Forcey has discovered recently that the resistance metric defined by a circular planar electric network obeys the Kalmanson property \cite{F}. We will call such a split metric an {\it electrical Kalmanson metric}. This important result puts a new light on the theory of electrical networks connecting it with phylogenetics and metric geometry.

The purpose of this paper is twofold: firstly we want to collect in one place the relevant facts from the research areas mentioned above and indicate interesting connections between them, secondly we provide a new description of the set of the electrical Kalmanson metrics inside the set of all Kalmanson metrics given in Theorem \ref{th: dual}. For the later we will exploit the connection between the space of circular planar electrical networks and the non-negative Isotropic Grassmannian $\mathrm{IG}_{\geq 0}(n-1, 2n)$ which uses the effective resistance matrix \cite{BGGK}, Theorem $5.6$ as opposed to the response matrix. This description is a modification of the original construction given by Thomas Lam \cite{L} and developed further in \cite{BGKT}, \cite{CGS}.

It turns out that the Kalmanson property itself is a consequence of this connection and the description of the electrical Kalmanson metrics we provide is given entirely in terms of the geometry of the non-negative part of this projective variety. In our description 
checking whether a given Kalmanson metric is an electrical Kalmanson metric amounts to checking that the Plücker coordinates of an explicitly defined point in the above Grassmannian are non-negative. 

Moreover, using the results from \cite{L} we propose an algorithm for reconstructing a minimal circular planar electrical network out of a given resistance matrix which might be useful for possible applications in phylogenetics.

It is remarkable that the theory of positivity might play a role in studying phylogenetic networks since it would allow us to apply the powerful machinery of the theory of cluster algebras developed for describing positivity in mathematical objects \cite{Z}.

This story should be extended to the compactifications of the respective spaces, taking cactus networks, the known strata in the compactification of circular planar electrical networks, to the newly defined compactified split systems as it was already started in \cite{DF}. In this picture the cactus networks should correspond to the pseudometrics playing the role of dissimilarity functions.

The connection of the tropical geometry of the Grassmannians and the space of trees and tree metrics found in \cite{SS} is another interesting direction for developing our work.

Finally building of an example from \cite{F} we show in Remark \ref{nk} that the planarity condition on the network is far from being necessary to guarantee the Kalmanson condition for the resistance metric. It leaves the description of the set of electrical Kalmanson metrics as an interesting question.

{\bf Acknowledgments.} For V. G. this article is an output of a research project implemented as part of the Basic Research Program at the National Research University Higher School of Economics (HSE University). Working on this project V. G. also visited the Max  Plank Institute of Mathematics in the Sciences in Leipzig, Germany in the Fall of 2024 and BIMSA in Beijing, China in the Summer 2024.
Research of A.~K.  on Sections \ref{sec:kalman}  was supported by the Russian Science Foundation project No. 20-71-10110 (https://rscf.ru/en/project/23-71-50012) which finances the work of A.K. at P. G. Demidov Yaroslavl State University.
Research of A.K.  on Section \ref{sec:rec}  was supported by the state assignment of MIPT (project FSMG-2023-0013).

The authors are grateful to Borya Shapiro, Misha Shapiro, Anton Petrunin, Lazar Guterman and especially to Stefan Forcey for useful discussions and helpful suggestions. The authors are grateful to the referees for carefully reading the paper and suggesting a number of useful comments to make it better.

\section{The space of electrical networks and the positive Isotropic Grassmannian}
\subsection{The space of electrical networks}

\begin{definition}
 A  electrical network $\mathcal E$ is a  graph $\Gamma$ with positive $\omega$ weights (conductances) attached to the edges and a chosen subset of the set of vertices which are called the boundary vertices or the nodes. 
\end{definition}
In this paper we will denote by $n$ the number of the nodes.

As explained in the introduction the key result in the theory of electrical networks says the boundary voltages and the currents are related to each other linearly via a  matrix $M_R(\mathcal E)=(x_{ij})$ called the {\it response matrix} of a network. $M_R(\mathcal E)$ has the following properties:
\begin{itemize}
    \item $M_R(\mathcal E)$ is a $n\times n$ symmetric matrix;
    \item All the non-diagonal entries $x_{ij}$ of $M_R(\mathcal E)$ are non-positive;
    \item For each row the sum of all its entries is equal to $0$.
\end{itemize}

Given an electrical network $\mathcal E$ the response matrix $M_R(\mathcal E)$ can be calculated as the Schur complement of a submatrix in the Laplacian matrix of the graph of $\mathcal E$ \cite{CIM}. 
Suppose that $M$ is a square matrix and $D$ is a non-singular square lower right-hand corner submatrix of $M$, so that $M$ has the block structure. 
\[
\begin{pmatrix}
A&B\\
C&D
\end{pmatrix}
\]
The Schur complement of $D$ in $M$ is the matrix $M/D = A - BD^{-1} C$. The Schur complement satisfies the following identity
\[\text{det} M = \text{det} (M/D)\text{det} D\]
Labeling the vertices starting from the nodes we get the Laplacian matrix of the graph representing $\mathcal E$ in a two by two block form as above. The submatrix $D$ corresponds to the connections between the internal vertices and is known to be non degenerate. Then $M_R(\mathcal E)=L/D$.

There are many electrical networks which have the same response matrix, we will describe them now. The following five local network transformations given below are called the {\it electrical transformations}. 
Two electrical networks are said to be equivalent if they can be obtained from each other by a sequence of the electrical transformations. This is an equivalence relation of course, so the set of electrical networks is partitioned in the equivalence classes.
\begin{theorem} \cite{CIM} \label{gen_el_1}
The electrical transformations preserve the response matrix of an electrical network.
\end{theorem}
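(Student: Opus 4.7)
The plan is to treat each of the five local electrical transformations in turn, exploiting two structural facts: the graph Laplacian is additive in edges, and the Schur complement is transitive, i.e.\ $(L/D_1)/D_2 = L/(D_1\cup D_2)$ whenever the elimination is legal. Since $M_R(\mathcal E) = L/D$ with $D$ the submatrix indexed by the interior vertices, I can eliminate interior vertices one at a time and match each intermediate stage with one of the local moves.

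The two ``trivial'' cases are parallel edges and self-loops. Two parallel edges of conductances $a,b$ between $u,v$, and a single edge of conductance $a+b$, produce identical contributions to $L$ on both the off-diagonal entries $L_{uv}=L_{vu}$ and on the diagonals $L_{uu}, L_{vv}$; self-loops contribute $0$ by definition of the Laplacian. Hence $L$ itself is unchanged, and so is $M_R$.

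The remaining three transformations each delete one interior vertex $v$. Here the strategy is to identify the one-step Schur elimination of $v$'s row and column in $L$ with the Laplacian of the transformed network. In the series case ($v$ has degree $2$ with neighbours $u_1,u_2$ and conductances $a,b$) the elimination produces $-ab/(a+b)$ at the $(u_1,u_2)$ entry and cancels the corresponding excess weight from the diagonals at $u_1,u_2$, exactly matching the Laplacian of the network in which the two edges are replaced by a single edge of conductance $ab/(a+b)$. For pendant removal ($v$ of degree $1$ joined to $u$ by an edge of weight $a$) the $1\times1$ Schur step subtracts $a^{2}/a = a$ from $L_{uu}$, undoing precisely the contribution of the deleted edge. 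The Y--$\Delta$ case requires genuine calculation: eliminating a degree-$3$ interior vertex with edge weights $a_1,a_2,a_3$ produces off-diagonal entries $-a_ia_j/S$, where $S=a_1+a_2+a_3$, which are exactly the Laplacian entries of a triangle on the three neighbours with conductances $b_{ij}=a_ia_j/S$; the diagonal corrections $-a_i^{2}/S$ then match because $\sum_{j\ne i} b_{ij} = a_i - a_i^{2}/S$.

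The main obstacle is the bookkeeping in the Y--$\Delta$ verification, where one must check not only the correct conductance formula but also that the diagonal entries conspire to yield a genuine Laplacian (rather than an arbitrary symmetric matrix). Once that is done, transitivity of the Schur complement closes the argument: iterating single-vertex eliminations along any sequence of electrical moves yields the same final reduction to the boundary block, namely $M_R(\mathcal E)$.
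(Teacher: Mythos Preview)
The paper does not prove this theorem; it is stated with a citation to \cite{CIM} and no argument is supplied. Your proposal is correct and is essentially the standard proof one finds in that reference: additivity of the Laplacian over edges handles parallel-edge reduction and loop removal, while the quotient (transitivity) property of Schur complements reduces the series, pendant, and $Y$--$\Delta$ moves to single-vertex eliminations, each of which you have verified---including the diagonal bookkeeping in the $Y$--$\Delta$ case, which is the only place a genuine computational slip could occur. One small remark: your argument as written treats the $Y\!\to\!\Delta$ direction by eliminating the centre; the reverse $\Delta\!\to\!Y$ move is then automatic, since the correspondence $b_{ij}=a_ia_j/S$ is invertible and the assertion ``same response matrix'' is symmetric.
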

\begin{figure}[h!]
    \centering
    \includegraphics[width=0.9\textwidth]{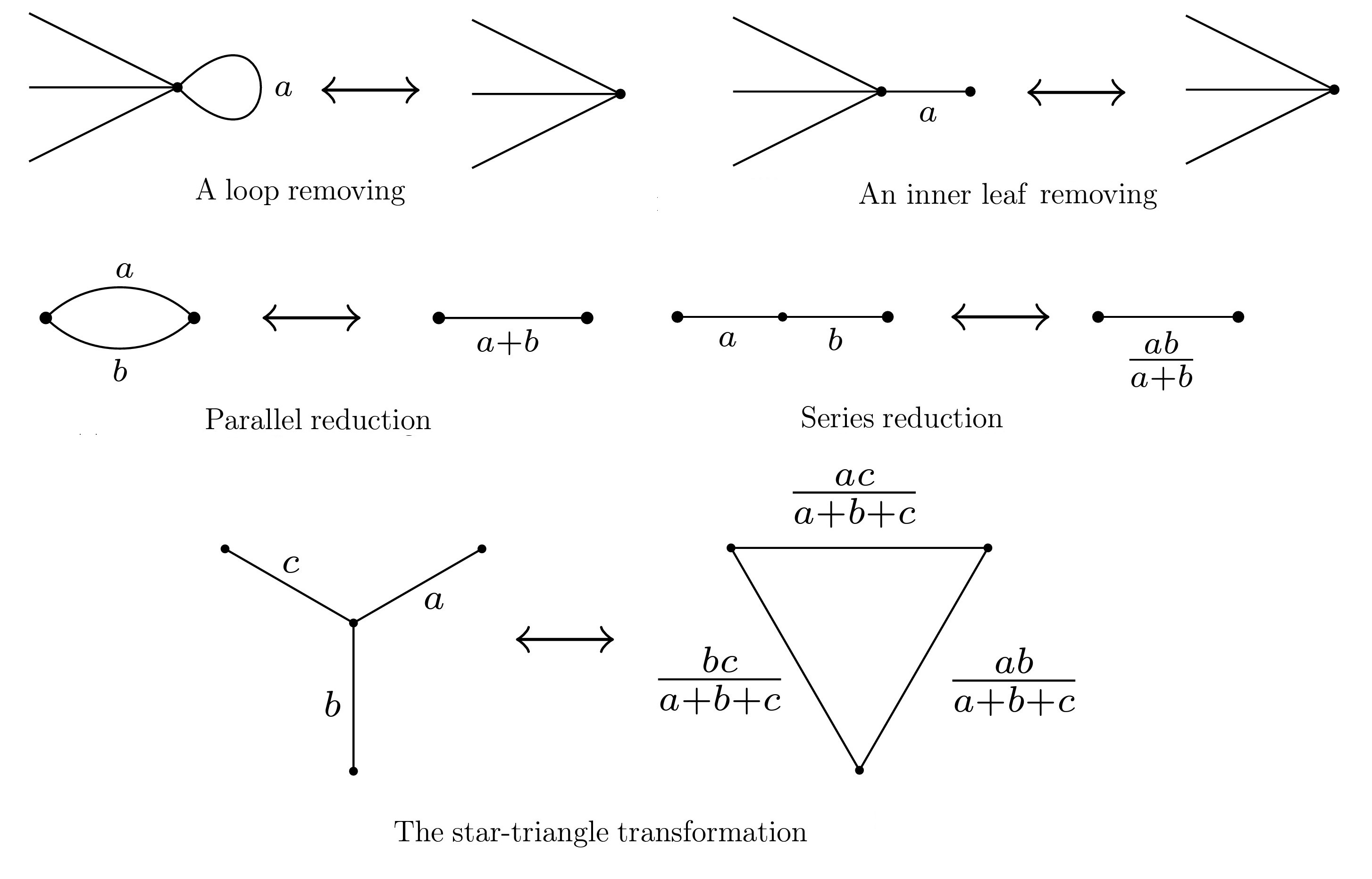}
 \hspace{-1.5cm}   \caption{Electrical transformations }
    \label{fig:el_trans}
\end{figure}

In this paper we will deal with a particular type of connected electrical networks, the {\it connected circular planar electrical networks}, unless otherwise stated. For these we require in addition the graph of $\mathcal E$ to be planar and such that the nodes are located on a circle and enumerated clockwise while the rest of the vertices are situated inside of this circle.

We will denote by $E_n$ the set of equivalence classes of the  circular electrical networks in this paper. Note that the notation for this set of equivalence classes used in \cite{CIM} and in \cite{CIW} is $\Omega_n$. 
The set $E_n$ allows the following elegant description.
\begin{definition}
Let $P=(p_1,\ldots,p_k)$ and $Q=(q_1,\ldots,q_k)$ be disjoint ordered subsets of the nodes arranged on a circle, then $(P;Q)=(p_1,\ldots,p_k;q_1,\ldots,q_k)$ is a {\it circular pair} if $p_1,\ldots, p_k,q_k,\ldots,q_1$ are in non-overlapping circular order around the boundary circle. Let $(P;Q)$ be a circular pair then the determinant of the submatrix $ M(P;Q)$ whose rows are labeled by $(p_1,\ldots,p_k)$ and the columns labeled by $(q_1,\ldots,q_k)$ is called the {\it circular minor} associated with a circular pair $(P;Q)$.
\end{definition}

\begin{theorem} \cite{CIM}, \cite{CdV} \label{Set of response matrices all network}
   The set of response matrices of the the elements of $E_n$ is precisely the set of the matrices $M$ such that
   \begin{itemize}
    \item  $M$ is a symmetric matrix;
    \item All the non-diagonal entries of $M$ are non-positive;
    \item For each row the sum of all its entries is equal to $0$.
    \item For any $k\times k$ circular minor $(-1)^k\det M(P;Q) \geq 0$.
    \item The kernel of $M$ is generated by the vector $(1,1,\dots,1)$.
\end{itemize}
Moreover, any two circular electrical networks which have the same response matrix are equivalent.
\end{theorem}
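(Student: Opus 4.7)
The plan is to establish the theorem in three stages: first, verify that the response matrix of any circular planar electrical network satisfies the five listed conditions; second, show conversely that every matrix with these properties is realized by some $\mathcal{E}\in E_n$; third, obtain the uniqueness statement by reducing each equivalence class to a canonical minimal representative.

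For the first stage, the initial three bullets follow immediately from the Schur complement formula $M_R(\mathcal{E})=L/D$ recalled above: symmetry, nonpositivity of the off-diagonal entries, and vanishing row sums are all inherited from the Laplacian $L$, and the kernel condition requires in addition that $\mathcal{E}$ be connected so that $L$ has a one-dimensional kernel generated by $(1,\dots,1)$. The crucial fourth bullet, the sign rule $(-1)^k\det M(P;Q)\ge 0$, is the combinatorial core. I would prove it as an electrical analogue of the Lindström--Gessel--Viennot lemma: expand $\det M(P;Q)$ as a signed sum over $k$-tuples of ``groves'' (spanning forests with prescribed boundary behaviour) joining $P$ to $Q$, weighted by positive rational functions in the conductances. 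The hypothesis that $(P;Q)$ is a circular pair forces every pair of crossing path systems to cancel via the standard swap involution, leaving only non-crossing systems, all contributing with the uniform sign $(-1)^k$.

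For the second stage, I would induct on $n$, the base $n\le 2$ being direct. The inductive step is the technical heart of the argument: given $M$ satisfying the five conditions, one locates a boundary edge or boundary spike whose conductance $\lambda>0$ can be read off from the data (typically from a ratio of two circular minors at the boundary) so that removing it produces a matrix $M'$ on the same or fewer nodes which still satisfies the conditions. Realising this removal as the inverse of one of the operations from Figure~\ref{fig:el_trans} and applying the inductive hypothesis to $M'$ reconstructs the network. The hard part will be showing that a peelable edge always exists and that the peeling preserves \emph{all} circular-minor inequalities simultaneously; the planarity of the cyclic boundary order is essential here, and in the degenerate cases where no boundary edge can be peeled directly, one first applies a $Y$--$\Delta$ transformation to expose one.

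For the third stage, I would introduce the notion of a \emph{critical} (edge-minimal) network and argue that the transformations in Figure~\ref{fig:el_trans} suffice to reduce any network to a critical one in its equivalence class, preserving the response matrix by Theorem~\ref{gen_el_1}. It then remains to prove that two critical circular planar networks with the same response matrix are isomorphic as weighted graphs. The plan is to read off the combinatorial type of a critical network from its associated medial graph, equivalently its wiring diagram, and to observe that this diagram is determined by the pattern of vanishing and non-vanishing circular minors. Once the underlying weighted graph is fixed, the individual conductances are recovered by solving a triangular system in the edge variables using a suitable family of circular minors, completing the uniqueness part.
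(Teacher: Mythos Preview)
The paper does not give a proof of this theorem at all: it is stated with citations to \cite{CIM} and \cite{CdV} and then used as a black box throughout. So there is no ``paper's own proof'' to compare against; what you have outlined is essentially the strategy carried out in those references (particularly \cite{CIM}), and in that sense your proposal is on target.

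One caution on your first stage: symmetry and the row-sum-zero property do pass directly through the Schur complement, but the nonpositivity of the off-diagonal entries of $M_R(\mathcal E)=L/D$ is \emph{not} a formal consequence of the block structure of $L$ alone. You need an additional argument here---either the M-matrix/inverse-positivity property of $D$, a maximum-principle argument for harmonic extensions, or the random-walk interpretation of the response matrix entries. As written, ``inherited from the Laplacian'' glosses over this step.
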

We will need the following notion later. 
\begin{definition}
Let $\mathcal E$ be a connected circular electrical network on a graph $\Gamma$. The {\it dual electrical network} $\mathcal E^*$ is defined in the following way
\begin{itemize}
\item the graph $\Gamma^*$ of $\mathcal E^*$ is the dual graph to $\Gamma$; 
\item for any pair of dual edges of $\Gamma$ and $\Gamma^*$ their conductances are reciprocal to each other;
\item the labeling of the nodes of $\mathcal E^{*}$ is determined by the requirement that the first node of $\mathcal E^{*}$ lies between the first and second node of $\mathcal E$.
\end{itemize}
\end{definition}
\subsection{Cactus electrical networks}

Setting an edge conductance to zero or infinity in $\mathcal E$ makes sense. According to the Ohm law it means that we delete or contract this edge. Doing it we either get isolated nodes or some nodes get glued together. We will consider the resulting network as a network with $n$ nodes, remembering how nodes get identified. Such a network is called {\it a cactus electrical network} with $n$ nodes. One can think of it as a collection of ordinary circular electrical networks with the total number of the nodes equal to $n$, glued along some of these nodes. Note, that the graph of such a network is planar, but it does not have to be connected.
  
\begin{figure}[h!]
    \centering
    \includegraphics[width=1.0\textwidth]{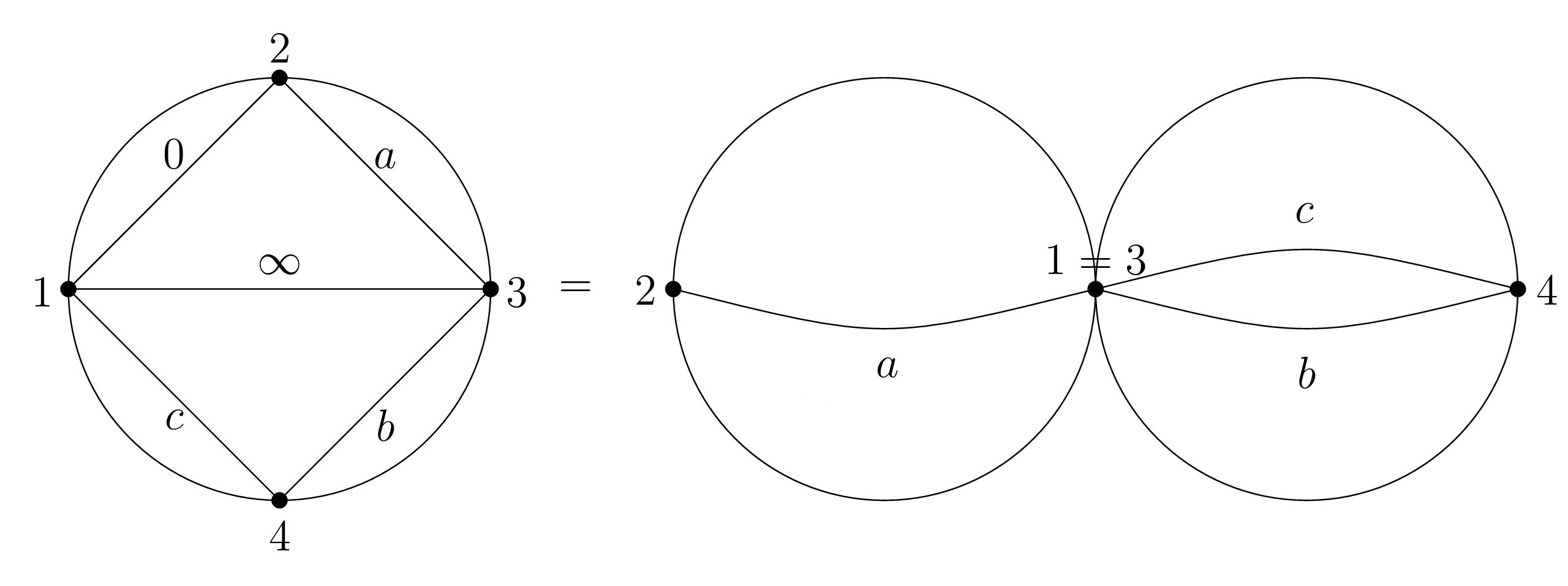}
    \caption{A cactus electrical network with 4 nodes }
    \label{fig:cactus}
\end{figure}

The electrical transformations can be applied to the cactus electrical networks. We denote by $\overline{E}_n$ the set of equivalence classes with respect to the electrical transformations of the cactus electrical networks with $n$ nodes.

The definition of a cactus network was introduced in \cite{L} where it was proved that the set  $\overline{E}_n$ is a compactification of $E_n$ in the appropriate sense. Note that the space of cactus networks in denoted by $E_n$ in \cite{L}. We apologize for this unfortunate clash of notations.

\subsection{Lam embedding} \label{sec: lam emb} 

Recall that the real Grassmannian $\mathrm{Gr}(k, n)$
is a differentiable manifold that parameterizes the set of all 
$k$-dimensional linear subspaces of a vector space $\mathbb{R}^n$. In fact it has a structure of a projective algebraic variety. The Plücker embedding  is an embedding of the Grassmannian $\mathrm{Gr}(k, n)$ into the projectivization of the $k$-th exterior power of the vector space $\mathbb{R}^n$
\[\iota :\mathrm {Gr} (k,n)\rightarrow \mathrm {P}(\Lambda ^{k}\mathbb{R}^n)\]
Suppose $W\subset \mathbb{R}^n$ is a $k$-dimensional subspace. To define 
$\iota (W)$, choose a basis
$(w_{1},\cdots ,w_{k})$ for 
$W$, and let 
$\iota (W)$ be the projectivization of the wedge product of these basis elements:
$\iota (W)=[w_{1}\wedge \cdots \wedge w_{k}]$, where 
$ [\,\cdot \,]$ denotes the projective equivalence class. 

For practical calculations one can view the matrix whose rows are the coordinates of the basis vectors $(w_{1},\cdots ,w_{k})$ as a representative of this equivalence class.
For any ordered sequence $I$ of $k$ positive integers
$1\leq i_{1}<\cdots <i_{k}\leq n$ denote by $\Delta_I$ the determinant of a $k\times k$ submatrix of the above matrix with columns labeled by the numbers from $I$. The numbers $\Delta_I$ are called the Plücker coordinates of the point $W$ of $\mathrm Gr(k,n)$. They are defined up to a common non zero factor.

\begin{definition}
The totally non-negative Grassmannian $\mathrm{Gr}_{\geq 0}(k, m)$ is the subset of the points of the Grassmannian $\mathrm{Gr}(k, n)$ whose Plücker coordinates $\Delta_I$ have the same sign or equal to zero.
\end{definition}

The following theorem of T. Lam  \cite{L} is one of the key results about the space of electrical networks.
\begin{theorem} \label{th: main_gr}
There is a bijection 
\[\overline {E}_n \cong \mathrm{Gr}_{\geq 0}(n-1,2n)\cap \mathbb{P}H\]
where $H$ is a certain subspace of $\bigwedge^{n-1}\mathbb{R}^{2n}$ of dimension equal to the Catalan number $C_n$.  

Moreover, the image of the set $E_n$ under this bijection is exactly the set of points with the Plücker coordinates $\Delta_{24\dots 2n-2}$ and $\Delta_{13\dots 2n-3}$ not equal to zero.
\end{theorem}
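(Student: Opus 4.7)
The plan is to construct an explicit map $\Phi\colon \overline E_n \to \mathbb{P}\bigl(\bigwedge^{n-1}\mathbb{R}^{2n}\bigr)$ whose image is cut out by a linear subspace $H$ of dimension $C_n$ together with non-negativity of Plücker coordinates, and then to invert it. First I would fix a representative $\mathcal E$ with edge conductances $\omega_e>0$, and attach to each non-crossing $(n-1)$-subset $I\subset[2n]$ of boundary "medial endpoints" a weighted grove count $G_I(\mathcal E)$: a sum over spanning forests of $\Gamma$ whose boundary-component partition matches the non-crossing partition encoded by $I$, each forest weighted by the product of its conductances. By the matrix-tree theorem and its multi-boundary generalization due to Kenyon and Wilson, each $G_I(\mathcal E)$ is a polynomial in the $\omega_e$ with non-negative coefficients. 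The numbers $\{G_I\}_I$ can be realized as the maximal minors of an explicit $(n-1)\times 2n$ matrix extracted from Kirchhoff's linear system at the boundary, so the Lindstr\"om--Gessel--Viennot lemma guarantees that $\Phi(\mathcal E):=[G_I]_I$ is an actual point of $\mathrm{Gr}(n-1,2n)$ lying in the totally non-negative part.

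Next I would verify that $\Phi$ is invariant under the five local electrical moves depicted in Figure~\ref{fig:el_trans}. Each move is a rational local replacement of conductances, and because the grove sum is local in the graph, a direct calculation restricted to the affected edges shows that every $G_I$ is multiplied by a common positive scalar, which disappears after projectivization. Combined with Theorem~\ref{gen_el_1} this descends $\Phi$ to a well-defined map on the equivalence classes $\overline E_n$.

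Third I would identify the ambient linear subspace $H$. The grove counts satisfy linear identities strictly stronger than the classical Plücker relations: they arise from the deletion-contraction dichotomy applied at a single edge, and on $\bigwedge^{n-1}\mathbb{R}^{2n}$ they take the shape of Kenyon--Wilson's grove identities. Writing these as a homogeneous linear system and reducing, the surviving coordinates turn out to be indexed by non-crossing $(n-1)$-subsets of $[2n]$, equivalently by Temperley--Lieb diagrams on $n$ strands; their count is the Catalan number $C_n$, giving $\dim H = C_n$.

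Finally, for bijectivity I would combine the recovery algorithm of Curtis--Ingerman--Morrow with Postnikov's parametrization of the totally non-negative cells of Grassmannians by plabic graphs. Given any point in $\mathrm{Gr}_{\geq 0}(n-1,2n)\cap\mathbb{P}H$, the ratios of its Plücker coordinates determine all circular minors of a prospective response matrix, and Theorem~\ref{Set of response matrices all network} then produces, uniquely up to equivalence, a cactus network realizing them. The main obstacle I expect is matching stratifications: one must check that the cell structure inherited from Postnikov's positroid stratification on $\mathrm{Gr}_{\geq 0}(n-1,2n)\cap\mathbb{P}H$ coincides with the combinatorial stratification of $\overline E_n$ by underlying medial graph, so that edges shrinking or exploding corresponds precisely to specific Plücker coordinates vanishing. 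The second assertion follows once this stratification is in place, since $\Delta_{2\,4\,\cdots\,(2n-2)}$ and $\Delta_{1\,3\,\cdots\,(2n-3)}$ correspond under the grove model to the unique boundary partitions in which all $n$ nodes are separated; these counts vanish exactly when the network degenerates by identifying two adjacent boundary nodes, that is, precisely when $\mathcal E$ becomes a proper cactus rather than an element of $E_n$.
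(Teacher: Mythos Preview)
The paper does not supply its own proof of this theorem: it is quoted verbatim as a result of T.~Lam and attributed to \cite{L}, with no argument given beyond the subsequent description of the explicit matrix $\Omega(\mathcal E)$ realizing the embedding on $E_n$. So there is no in-paper proof to compare your proposal against; you are in effect sketching Lam's original argument.

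Your outline is broadly in the spirit of \cite{L} (grove generating functions as Pl\"ucker coordinates, invariance under the five electrical moves, the Catalan-dimensional span), but several steps are imprecise enough that they would not go through as written. The claim that the Lindstr\"om--Gessel--Viennot lemma shows $[G_I]_I$ is a Grassmannian point is misplaced: what is needed is that the grove counts arise as the maximal minors of a concrete $(n{-}1)\times 2n$ matrix (Lam's concatenation, or the matrix $\Omega(\mathcal E)$ recalled here), and this is an algebraic identity rather than an LGV argument. Your inverse-direction paragraph leans on Theorem~\ref{Set of response matrices all network}, but that theorem only characterizes $E_n$, not the compactification $\overline E_n$; for genuine cactus points there is no response matrix in the usual sense, and the surjectivity/injectivity on the boundary strata requires Lam's medial-strand and electroid machinery, not Curtis--Ingerman--Morrow recovery. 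Finally, your reading of the two distinguished coordinates is off: they do not both encode the all-separate partition. One of $\Delta_{2\,4\cdots(2n-2)}$, $\Delta_{1\,3\cdots(2n-3)}$ is the spanning-tree count of $\mathcal E$ (nonzero iff $\mathcal E$ is connected), the other is the spanning-tree count of the dual $\mathcal E^*$ (nonzero iff no boundary nodes are identified); both must be nonzero for $\mathcal E$ to lie in $E_n$ rather than on the cactus boundary, and they detect two different degenerations, not the same one.
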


We will recall the explicit construction of the embedding of ${E}_n$ obtained in \cite{BGKT}, which is induced by the above bijection.
Let $\mathcal E$ be a circular electrical network with the response matrix $M_R(\mathcal E)=(x_{ij}).$  The following $n\times 2n$ matrix
\begin{equation} \label{omega_eq}
\Omega(\mathcal E)=\left(
\begin{array}{cccccccc}
x_{11} & 1 & -x_{12} & 0 & x_{13} & 0 & \cdots & (-1)^n\\
-x_{21} & 1 & x_{22} & 1 & -x_{23} & 0 & \cdots & 0 \\
x_{31} & 0 & -x_{32} & 1 & x_{33} & 1 & \cdots & 0 \\
\vdots & \vdots &  \vdots &   \vdots &  \vdots & \vdots & \ddots & \vdots 
\end{array}
\right)    
\end{equation}
gives the point in $\mathrm{Gr}_{\geq 0}(n-1,2n)\cap \mathbb{P}H$ which corresponds to $\mathcal E$ under the Lam bijection. 

Since the sum of the entries in each row of $M_R(\mathcal E)$ is equal to $0$ (see Theorem \ref{Set of response matrices all network}), we have that  $\Omega(\mathcal E)$ has the rank equal to $n-1$. Therefore, the dimension of the row space of $\Omega(\mathcal E)$ is equal to $n-1$, hence it defines a point in $\mathrm{Gr}(n-1,2n)$. The Plücker coordinates of the point associated with $\Omega(\mathcal E)$ can be calculated as the maximal size minors of the matrix $\Omega'(\mathcal E)$ obtained from $\Omega(\mathcal E)$  by deleting, for example, the first row.
\begin{theorem} \cite{BGK} \label{about sur}
Let $A=(a_{ij})$ be a matrix which satisfies the first three conditions of Theorem \ref{Set of response matrices all network} and $\Omega(A)$ be a matrix constructed  according to the  formula \eqref{omega_eq}. 
If  $\Omega(A)$ defines a  point  in  $Gr_{\geq 0}(n-1, 2n)$  and the Plücker coordinate $\Delta_{13\dots 2n-3}\bigl(\Omega(A)\bigr)$ is not equal to zero, then there is a connected electrical network $\mathcal E \in E_n$ such that 
$A=M_R(\mathcal E).$
\end{theorem}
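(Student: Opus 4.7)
The plan is to reduce the claim to Theorem \ref{th: main_gr} (Lam's bijection) by showing that the given hypotheses place $\Omega(A)$ in the image of $E_n$ under this bijection, and then to recover $A$ as the response matrix of the resulting network from the special normal form of $\Omega$.

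The first step is a direct computation: the submatrix of $\Omega(A)$ formed by the even columns $2,4,\dots,2n-2$, with the first row deleted, is upper bi-diagonal with $1$'s on the diagonal and on the superdiagonal, so its determinant is $\pm 1$. Hence $\Delta_{24\dots 2n-2}\bigl(\Omega(A)\bigr)\neq 0$ automatically, independent of $A$. Combined with the hypothesis $\Delta_{13\dots 2n-3}\bigl(\Omega(A)\bigr)\neq 0$, the two nonvanishing conditions appearing in the second half of Theorem \ref{th: main_gr} are met. Next, one checks that $\Omega(A)$ lies in $\mathbb{P}H$: the linear relations cutting out $H$ inside $\bigwedge^{n-1}\mathbb{R}^{2n}$ in \cite{L} are precisely those satisfied by any matrix of the form \eqref{omega_eq} built from a symmetric matrix with vanishing row sums, and so the verification is a bookkeeping exercise that the structured shape of $\Omega(A)$ realizes automatically. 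With $\Omega(A)\in \mathrm{Gr}_{\geq 0}(n-1,2n)\cap\mathbb{P}H$ and the two distinguished Plücker coordinates nonzero, Theorem \ref{th: main_gr} produces a network $\mathcal E\in E_n$ whose image under the Lam embedding coincides with $\Omega(A)$.

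To conclude that $M_R(\mathcal E)=A$, I would observe that $\Omega(\mathcal E)$ and $\Omega(A)$ have the same row span, since they project to the same point of the Grassmannian. Both matrices are of the shape \eqref{omega_eq}, and that shape is a normal form: the even-column entries depend only on the row index and not on the underlying matrix, while the odd-column entries are then uniquely determined by the row span together with the row-rank condition. It follows that $\Omega(\mathcal E)=\Omega(A)$ entry-by-entry, and reading off the odd-column entries yields $M_R(\mathcal E)=A$.

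The main obstacle I expect is the second half of step one — verifying that the linear relations defining $H$ are automatically satisfied by matrices of the form \eqref{omega_eq}. This requires unpacking the combinatorics of the Lam construction in \cite{L} and matching it with the explicit entry pattern in \eqref{omega_eq}. Once this is in hand, the rest of the argument is essentially formal, relying only on Theorem \ref{th: main_gr} and the uniqueness of the normal form.
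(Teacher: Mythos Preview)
The paper does not supply its own proof of Theorem~\ref{about sur}; the statement is simply quoted with a citation to \cite{BGK}, which is listed as ``in preparation.'' So there is no in-paper argument to compare against.

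On its own merits your outline is sound. The computation that $\Delta_{24\cdots 2n-2}\bigl(\Omega(A)\bigr)=\pm 1$ from the bidiagonal structure of the even columns is correct, and your normal-form argument at the end also goes through: the $n\times n$ block of even columns is independent of $A$ and has rank $n-1$, its one-dimensional left kernel is spanned by $(1,-1,1,\dots,(-1)^{n-1})$, and applying this vector to the odd columns gives $(-1)^{j-1}\sum_i a_{ij}=0$ by the zero-row-sum condition. Hence the projection of the row span onto even coordinates is injective, and any matrix of shape \eqref{omega_eq} with that row span is determined entrywise --- so $\Omega(\mathcal E)=\Omega(A)$ and $A=M_R(\mathcal E)$.

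The step you rightly flag as the main obstacle, verifying $\Omega(A)\in\mathbb{P}H$, is where the actual content sits and is more than bookkeeping. Two ways to discharge it: either observe that the argument in \cite{BGKT} placing $\Omega(\mathcal E)$ in $\mathbb{P}H$ uses only symmetry and zero row sums of $M_R(\mathcal E)$ and so applies verbatim to any such $A$; or note that $A\mapsto[\Omega(A)]$ is a morphism from an irreducible affine space, membership in $\mathbb{P}H$ is Zariski closed, and it holds on the Zariski-dense locus of genuine response matrices, hence everywhere.
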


\begin{example}
 For the network  $\mathcal E \in E_4$  seen in the Figure \ref{treedaul} the matrix $\Omega(\mathcal E)$ has the following form:
 \begin{equation*}
\Omega(\mathcal E) = \left(
\begin{array}{cccccccc}
\dfrac{5}{8}& 1  & \dfrac{1}{8}  & 0 &  -\dfrac{1}{8} & 0 & \dfrac{3}{8} & 1  \\
&  &   &  &   &  &  &  \\
\dfrac{1}{8}& 1  & \dfrac{5}{8}  & 1 &  \dfrac{3}{8} & 0 & -\dfrac{1}{8} & 0  \\
&  &   &  &   &  &  &  \\
-\dfrac{1}{8}& 0  &\dfrac{3}{8}  & 1 & \dfrac{5}{8} & 1  & \dfrac{1}{8} & 0 \\
&  &   &  &   &  &  &  \\
\dfrac{3}{8}& 0  & -\dfrac{1}{8}  & 0 &  \dfrac{1}{8} & 1  & \dfrac{5}{8} & 1 \\
\end{array}
\right).
\end{equation*}
\end{example}

In fact, the row space of $\Omega(\mathcal E)$ is isotropic with respect to a particular symplectic form. This refines the above embedding to a submanifold 
$$\mathrm{IG}_{\geq 0}(n-1, 2n)\subset \mathrm{Gr}_{\geq 0}(n-1, 2n)$$
made out of isotropic subspaces of $\mathbb{R}^{2n}$ \cite{BGKT}, \cite{CGS}.

\section{Characterization of resistance distance} \label{sec:kalman}
\subsection{Resistance metric}
Now we assume that an electrical network is just connected.
\begin{definition}  \label{def:eff-resist}
    Let $\Gamma, \omega$ be a connected  graph  with a weight function $\omega$ on the edges and $i,\, j\in \Gamma$ be two of its vertices. Consider it as an electrical network  $\mathcal E_{ij}(\Gamma, \omega)$ on a graph $\Gamma$ with a conductivity function $\omega$  by declaring the vertices  $i$ and  $j$ to be the nodes while the remaining vertices to be internal vertices.
    
Apply the voltages $U=(U_i, U_j)$ to the nodes of $\mathcal E_{ij}(\Gamma, \omega)$ such that they induce the boundary currents $I=(1, -1)$.  Then the {\it effective resistance} between the nodes  $i$ and  $j$ is defined  as 
    $$R_{ij}=|U_i-U_j|$$
Obviously $R_{ij}=R_{ji}$.  
\end{definition}

This definition can be stated which is due to Gustav Kirchhoff in a beautiful way using the combinatorics of the graph. Two pieces of notation are required: 
recall the generating function of spanning-trees of $\Gamma$ is defined as follows
\[T(\Gamma,\omega) = \sum_{T\in T(\Gamma)} \omega(T),\]
where the sum is taken over the set $T(\Gamma)$ of all spanning trees of $\Gamma$ and $\omega(T)$ is defined as the product of the weights of the edges of $T$. 
For a graph $\Gamma, \omega$ as above and vertices $i,j \in V$, we let $\Gamma/ij$ denote the graph obtained by merging the two vertices $i$ and $j$ together into a single vertex. 
\begin{theorem}\label{KF} (Kirchhoff’s Formula \cite{MTT}, \cite{Wa}). Let $(\Gamma,\omega)$ be an electrical network, and let $i,j \in V$. The effective resistance between $i$ and $j$ in $\Gamma$ is 

\[R_{ij} =\frac{T(\Gamma/ij;\omega)} {T(\Gamma;\omega)} \]
\end{theorem}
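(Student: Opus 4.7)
The plan is to derive the formula from Definition \ref{def:eff-resist} via the weighted graph Laplacian and the Matrix-Tree Theorem. Let $L = L(\Gamma,\omega)$ denote the weighted Laplacian, with $L_{uu}=\sum_{u\sim v}\omega(uv)$ and $L_{uv}=-\omega(uv)$ for $u\neq v$. Unpacking the definition: applying boundary currents $I = e_i - e_j$ at the nodes while the internal vertices carry zero current means that Kirchhoff's current law at every vertex of $\Gamma$ reads $LU = e_i - e_j$, and hence $R_{ij}=U_i-U_j$.

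First I would reduce to a nonsingular linear system. Since $\Gamma$ is connected, $\ker L = \mathbb{R}\mathbf{1}$ and $e_i-e_j \in \mathbf{1}^{\perp}$, so $LU=e_i-e_j$ is solvable. Fixing any reference vertex $k$ and setting $U_k=0$, let $\tilde L$ be the principal submatrix of $L$ obtained by deleting row and column $k$; then $\tilde L$ is invertible and Cramer's rule expresses $U_i - U_j$ as a ratio whose denominator is $\det\tilde L$ and whose numerator is a signed combination of $2$-minors of $L$.

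Next I would identify both sides combinatorially through the weighted Matrix-Tree Theorem. The denominator $\det \tilde L$ equals $T(\Gamma;\omega)$ by the classical MTT, independent of the choice of $k$. For the numerator I would invoke the All-Minors Matrix-Tree Theorem: for a subset $S\subseteq V(\Gamma)$, the principal minor of $L$ obtained by deleting the rows and columns indexed by $S$ equals the $\omega$-weighted sum over spanning forests $F$ with exactly $|S|$ components, each containing one vertex of $S$. Applying this to $S=\{i,j\}$, together with the cofactor expansion produced by Cramer's rule, shows that the numerator of $U_i-U_j$ is exactly $\sum_F \omega(F)$ over spanning $2$-forests of $\Gamma$ that separate $i$ from $j$. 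Finally, the contraction map $F\mapsto F/ij$ is a weight-preserving bijection between such separating $2$-forests of $\Gamma$ and spanning trees of $\Gamma/ij$ (any edge of $\Gamma$ joining $i$ and $j$ becomes a loop in $\Gamma/ij$ and is discarded), yielding $T(\Gamma/ij;\omega)$ as claimed.

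The main obstacle is the careful bookkeeping in the second step: one must verify that the several terms produced by Cramer's rule for $U_i-U_j$, corresponding to the cofactor contributions at the four positions $(i,i)$, $(j,j)$, $(i,j)$, $(j,i)$, combine with exactly the right signs to collapse into a single unsigned sum over separating $2$-forests rather than a cancellation-prone signed version. Once this sign calculation is in place, the bijection with spanning trees of $\Gamma/ij$ is elementary, and Kirchhoff's formula follows.
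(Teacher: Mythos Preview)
The paper does not supply its own proof of this theorem; it is stated as a classical result with citations to \cite{MTT} and \cite{Wa} and then used later in Remark~\ref{nk}. Your outline is the standard Laplacian/Matrix-Tree argument and is correct.

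One simplification that removes the ``main obstacle'' you identify: instead of grounding at an arbitrary vertex $k$ and tracking the four cofactor terms $C_{ii}+C_{jj}-C_{ij}-C_{ji}$, choose $k=j$. Then $U_j=0$, the reduced right-hand side is the single unit vector $e_i$, and Cramer's rule gives $R_{ij}=U_i$ as the ratio of the principal minor of $L$ obtained by deleting rows and columns $\{i,j\}$ to the principal minor obtained by deleting row and column $j$. Both are principal minors, so no sign bookkeeping is needed, and the All-Minors Matrix-Tree Theorem identifies them directly with the separating $2$-forest sum and with $T(\Gamma;\omega)$ respectively. The bijection with spanning trees of $\Gamma/ij$ is then exactly as you describe.
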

The following lemma relates the effective resistances and the response matrix entries is well known. 
\begin{lemma} \cite{KW 2011} \label{lem:eff-resist}
Let $\mathcal E$ be a connected  electrical network with $n$ nodes\\ $\{1, \dots, i, \dots, j, \dots, n\} $, and let the boundary voltages  $U = (U_1, \dots , U_n)$ be such that
\begin{equation} \label{eq-resist}
    M_R(\mathcal E)U = -e_i + e_j,
\end{equation}
 where $e_k, \ k \in \{1, \dots  , n\}$ is the  standard basis of $\mathbb{R}^n$.
Then $$|U_i - U_j|=R_{ij}.$$
\end{lemma}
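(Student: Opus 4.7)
The plan is to reduce the situation to the two-node setup of Definition \ref{def:eff-resist} by showing that the voltage $U$, once extended to the internal vertices, coincides (up to an overall sign of the applied current) with the voltage distribution in the network $\mathcal{E}_{ij}(\Gamma,\omega)$ obtained by declaring only $i$ and $j$ to be boundary nodes. Once this is done the identity $|U_i - U_j| = R_{ij}$ is immediate from the definition of effective resistance.

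First I would recall the Schur complement description used earlier in the paper: writing the Laplacian of $(\Gamma,\omega)$ in the block form $L = \left(\begin{smallmatrix} A & B \\ C & D \end{smallmatrix}\right)$, with rows and columns partitioned as (boundary) versus (internal), one has $M_R(\mathcal{E}) = A - BD^{-1}C$. Given the boundary voltages $U$, the unique extension $V_{\mathrm{int}} = -D^{-1}CU$ to the interior satisfies Kirchhoff's current law at every internal vertex, and the resulting net boundary currents are exactly $M_R(\mathcal{E})U$. Under the hypothesis \eqref{eq-resist} this current vector vanishes at every boundary vertex $k \notin \{i,j\}$ and equals $-1$ at $i$, $+1$ at $j$.

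Next I would reinterpret this vanishing as further Kirchhoff constraints. In $\mathcal{E}_{ij}(\Gamma,\omega)$ every vertex except $i$ and $j$ is internal, so Kirchhoff's law is required there. Since the net current at every such vertex already vanishes for $(U, V_{\mathrm{int}})$, this combined function is an admissible voltage distribution on $\mathcal{E}_{ij}(\Gamma,\omega)$ with boundary currents $(-1,+1)$ at $(i,j)$, and it is unique by invertibility of the principal submatrix of $L$ obtained by deleting the rows and columns indexed by $i$ and $j$ — a standard consequence of the connectedness of $\Gamma$ and the matrix-tree theorem. Flipping the overall sign, this is exactly the distribution used in Definition \ref{def:eff-resist} to define $R_{ij}$, so $|U_i - U_j| = R_{ij}$.

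The main obstacle is bookkeeping: carefully matching the sign conventions on currents and voltages under the two interpretations of the same graph (all-boundary versus only $\{i,j\}$-boundary), and verifying the invertibility statements needed to guarantee that the extension of the boundary voltages is unique in each setting. Both points are standard and require no input beyond what has already been recalled in Section~2.
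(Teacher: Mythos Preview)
Your proposal is correct and follows the same idea as the paper's proof: since $M_R(\mathcal E)U$ vanishes at every boundary node other than $i$ and $j$, those nodes may be reclassified as interior vertices, placing us exactly in the setting of Definition~\ref{def:eff-resist}. The paper dispatches this in one sentence, while you spell out the Schur-complement extension and the uniqueness of the harmonic extension, but the underlying argument is identical.
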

\begin{proof}
    Indeed, if the boundary voltages $U$ are such as in \eqref{eq-resist} we can consider all the vertices except $i$ and $j$ as the inner vertices, and then $|U_i - U_j|$ is precisely as in Definition \ref{def:eff-resist}.  
\end{proof}
For convenience, we will organize the effective resistances $R_{ij}$ between the nodes of $\mathcal E$ in a symmetric matrix $R_{\mathcal E}$ setting $R_{ii}=0$ for all $i$.
We call this matrix the {\it resistance matrix} of $\mathcal E$ and denote it by $R_{\mathcal E}$. 

From Lemma \ref{lem:eff-resist} it follows that 
$$R_{ij}=(-e_i+e_j)^t\bigl(M_R(\mathcal E)\bigr)^{-1}(-e_i+e_j),$$
where $\bigl(M_R(\mathcal E)\bigr)^{-1}(-e_i+e_j)$ means a vector $U$ which satisfies \eqref{eq-resist}. Notice that such a vector always exists. 

\begin{proposition}\cite{KW 2011} \label{th: about inverse resp} 
Let $\mathcal E$ be a connected electrical network. Denote by $M'_R(\mathcal E)$ the matrix obtained from $M_R(\mathcal E)$ by deleting  the last row and the last column, then $M'_R(\mathcal E)$ is invertible. The matrix elements of its inverse are given by the formula
 \begin{equation*}
M'_R(\mathcal E)^{-1}_{ij}=\begin{cases}
   R_{in},\, \text{if}\,\, i=j   \\
   \frac{1}{2}(R_{in}+R_{jn}-R_{ij}),\, \text{if}\,\, i\not = j,\\
    
      \end{cases}
    \end{equation*} 

\end{proposition}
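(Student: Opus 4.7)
The plan is to reduce everything to Lemma \ref{lem:eff-resist}: once we can produce, for each pair of boundary indices $a,b$, a vector $\tilde U \in \mathbb{R}^n$ with $M_R(\mathcal E)\tilde U = -e_a + e_b$ whose entries come from $(M'_R)^{-1}$, the lemma identifies $|\tilde U_a - \tilde U_b|$ with $R_{ab}$, and expanding this in the entries of $(M'_R)^{-1}$ yields the inverse formula.

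First I would establish invertibility of $M'_R$. If $M'_R v = 0$ for some $v \in \mathbb{R}^{n-1}$, extend by a zero last coordinate to $\tilde v = (v,0) \in \mathbb{R}^n$. Since $M_R$ is symmetric with zero row sums it also has zero column sums, so $\mathbf{1}^T M_R \tilde v = 0$; combined with the vanishing of the first $n-1$ entries of $M_R \tilde v$, the last one is zero too. Thus $\tilde v \in \ker M_R$, which for a connected network is one-dimensional and spanned by the all-ones vector; since $\tilde v_n = 0$ this forces $v=0$. The very same column-sum argument then manufactures the desired lifts: for $a \in \{1,\dots,n-1\}$ set $\tilde V^{(a)} := (-(M'_R)^{-1} e_a,\, 0)$, and for $a \neq b$ in $\{1,\dots,n-1\}$ set $\tilde W^{(ab)} := ((M'_R)^{-1}(-e_a+e_b),\, 0)$; a direct check gives $M_R \tilde V^{(a)} = -e_a + e_n$ and $M_R \tilde W^{(ab)} = -e_a + e_b$.

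Applying Lemma \ref{lem:eff-resist} gives $R_{an} = |(M'_R)^{-1}_{aa}|$, and after bilinear expansion,
\[
R_{ab} = \bigl| (-e_a + e_b)^T (M'_R)^{-1} (-e_a + e_b) \bigr| = \bigl| (M'_R)^{-1}_{aa} + (M'_R)^{-1}_{bb} - 2 (M'_R)^{-1}_{ab} \bigr|.
\]
To remove the absolute values I would invoke positive-definiteness of $M'_R$: since $M_R$ is the Schur complement of the (positive semi-definite) graph Laplacian with respect to its positive definite interior block, $M_R$ is PSD, so the principal submatrix $M'_R$ is PSD, and combined with invertibility it is positive definite. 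Thus $(M'_R)^{-1}$ is PD, so $(M'_R)^{-1}_{aa} \geq 0$ and the quadratic form value above is nonnegative. Fixing signs then gives $(M'_R)^{-1}_{aa} = R_{an}$ and $R_{an} + R_{bn} - 2 (M'_R)^{-1}_{ab} = R_{ab}$, which rearranges to the claimed formula. The only real subtlety is this sign bookkeeping: the lemma only supplies the absolute value of the relevant voltage difference, and without the positive-definiteness of $M'_R$ one cannot pin down the signs of the off-diagonal entries of its inverse.
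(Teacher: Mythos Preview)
Your proof is correct: the invertibility argument via $\ker M_R=\operatorname{span}(\mathbf 1)$, the construction of the lifts $\tilde V^{(a)}$ and $\tilde W^{(ab)}$, the application of Lemma~\ref{lem:eff-resist}, and the sign determination through positive definiteness of $M'_R$ all go through as written. The paper itself offers no proof of this proposition (it simply cites \cite{KW 2011}), so there is nothing to compare against; your argument is a clean self-contained justification that fits well with the surrounding material.
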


Proposition \ref{th: about inverse resp} and Lemma \ref{eq-resist} show that the resistance and the response matrices define each other. Therefore, Theorem \ref{gen_el_1} would hold if we replace the response matrix with the resistance matrix in its statement.

\begin{remark}\label{inverse} The formula from Proposition \ref{th: about inverse resp} is well known in different areas of mathematics. It appeared in the literature under the names the Gromov product, the Farris transform, the Covariance mapping between the Cut and the Covariance cones, see \cite{GandC} for more information. This allows us to view $M'_R(\mathcal E)^{-1}$, the "inverse" of the response matrix, as a similarity function corresponding under the Covariance mapping to a dissimilarity function represented by the resistance metric $R_{\mathcal E}$. We are planning to explore the properties of the resistance matrix from these points of view in the future publications.
\end{remark}
Proposition \ref{th: about inverse resp} provides a simple proof that the effective resistances $R_{ij}$ satisfy the triangle inequality.
\begin{theorem} \label{th:about metric}
    Let $\mathcal E$ be an  electrical network on a connected network $\Gamma$ then for any of three nodes $k_1, k_2$ and $k_3$ the triangle inequality holds:
    $$R_{k_1k_3}+R_{k_2k_3}-R_{k_1k_2} \geq 0.$$
    Hence the set of all $R_{vw}$ defines a metric on the nodes of $\Gamma$.
   
\end{theorem}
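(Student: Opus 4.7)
The plan is to reduce the triangle inequality directly to a statement about the inverse of a principal submatrix of the response matrix, using Proposition \ref{th: about inverse resp}. After relabeling the nodes so that $k_3$ plays the role of the last node $n$, Proposition \ref{th: about inverse resp} gives
$$\tfrac{1}{2}\bigl(R_{k_1 k_3}+R_{k_2 k_3}-R_{k_1 k_2}\bigr)=\bigl(M'_R(\mathcal E)\bigr)^{-1}_{k_1 k_2},$$
so it suffices to show that the off-diagonal entries of $\bigl(M'_R(\mathcal E)\bigr)^{-1}$ are non-negative. (One should first note that Proposition \ref{th: about inverse resp} was stated with $n$ singled out only for convenience; the same formula is valid for the principal submatrix obtained by deleting any single row and column, since that amounts to a relabeling of the nodes.)

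The next step is to argue that $M'_R(\mathcal E)$ is a Stieltjes matrix, i.e.\ a symmetric positive-definite matrix with non-positive off-diagonal entries. Symmetry and the sign condition on off-diagonals are inherited directly from the properties of $M_R(\mathcal E)$ recorded at the start of Section 2. Positive definiteness follows from the fact that, for a connected network, $M_R(\mathcal E)$ is positive semi-definite with kernel exactly $\mathbb{R}\cdot(1,\dots,1)$ (this is standard from the Schur complement realization $M_R(\mathcal E)=L/D$, where $L$ is the Laplacian), so striking the last row and column kills the kernel and leaves a positive-definite matrix. The row sums of $M'_R(\mathcal E)$ are exactly $-x_{in}\geq 0$, which reflects the diagonal dominance underlying the Stieltjes property.

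One then invokes the classical fact that the inverse of a Stieltjes matrix has non-negative entries, which finishes the proof. The main obstacle is justifying this last step cleanly; I would either cite it as a standard result from the theory of $M$-matrices, or supply a short self-contained argument: write $M'_R(\mathcal E)=sI-N$ with $s$ larger than the largest diagonal entry of $M'_R(\mathcal E)$ and $N$ entrywise non-negative; positive definiteness of $M'_R(\mathcal E)$ together with the Perron–Frobenius theorem gives $\rho(N)<s$, so the Neumann series
$$\bigl(M'_R(\mathcal E)\bigr)^{-1}=\frac{1}{s}\sum_{k\geq 0}\Bigl(\frac{N}{s}\Bigr)^{k}$$
converges and is a non-negative combination of non-negative matrices, hence has non-negative entries. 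Combining this with the displayed identity at the start yields the triangle inequality, and the final sentence about $R$ defining a metric then follows from $R_{ii}=0$ and $R_{ij}=R_{ji}>0$ for $i\neq j$ (strict positivity being immediate from Theorem \ref{KF} for a connected graph).
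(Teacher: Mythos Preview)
Your proof is correct, but it follows a genuinely different route from the paper's argument, and it is worth spelling out the contrast.

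The paper does not work with the full response matrix of $\mathcal E$. Instead, it first passes to the auxiliary network $\mathcal E_{k_1k_2k_3}$ obtained by declaring only $k_1,k_2,k_3$ to be boundary nodes, applies Proposition~\ref{th: about inverse resp} to that three-node network, and then computes the $(k_1,k_2)$ entry of the inverse of the resulting $2\times 2$ matrix $M'_R(\mathcal E_{k_1k_2k_3})$ by hand. The non-negativity then drops out of a one-line formula in the three off-diagonal entries $x_{k_ik_j}\leq 0$. No general matrix theory is invoked.

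Your approach instead stays with the full $(n-1)\times(n-1)$ matrix $M'_R(\mathcal E)$ and proves the stronger fact that \emph{every} entry of its inverse is non-negative, via the Stieltjes/$M$-matrix machinery (the Neumann series argument you sketch is sound, and your use of Perron--Frobenius to get $\rho(N)<s$ is exactly what is needed). This buys you a more conceptual statement and avoids the reduction to three nodes, at the cost of importing a classical but non-trivial piece of linear algebra. The paper's version is more elementary and entirely self-contained, but establishes only the single inequality it needs.
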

\begin{proof} 
     Let $\mathcal E_{k_1k_2k_3}$ be a connected electrical network obtained from $\mathcal E$ by declaring the vertices $k_1, k_2, k_3$ to be the boundary nodes, while remaining vertices are declared to be inner and $M_R(\mathcal E_{k_1k_2k_3})$ be its response matrix, then according to Proposition \ref{th: about inverse resp} we have that:
    $$M'_R(\mathcal E_{k_1k_2k_3})^{-1}_{k_1k_2}=\frac{1}{2}(R_{k_1k_3}+R_{k_2k_3}-R_{k_1k_2}),$$
    therefore  to get the statement it is enough to verify that  $M'_R(\mathcal E_{k_1k_2k_3})^{-1}_{k_1k_2} \geq 0.$ 
Indeed, the matrix $M'_R(\mathcal E_{k_1k_2k_3})$ has the following form:
 \begin{equation*} 
M'_R(\mathcal E_{k_1k_2k_3})=\left(\begin{matrix}
x_{k_1k_1} & x_{k_1k_2}   \\
 x_{k_1k_2} &  x_{k_2k_2}      
\end{matrix} \right) = \left(\begin{matrix}
-x_{k_1k_2}-x_{k_1k_3} & x_{k_1k_2}   \\
 x_{k_1k_2} &  -x_{k_1k_2}-x_{k_2k_3}      
\end{matrix} \right).
\end{equation*}
    By the direct computation we obtain that 
    \begin{equation*}
        M'_R(\mathcal E_{k_1k_2k_3})^{-1}_{k_1k_2}=\frac{-x_{k_1k_2}}{\det  M'_R(\mathcal E_{k_1k_2k_3}) }=\frac{-x_{k_1k_2}}{x_{k_1k_2}x_{k_2k_3}+x_{k_1k_3}x_{k_1k_2}+x_{k_1k_3}x_{k_2k_3}},
    \end{equation*}
    By the definition of the response matrix all $x_{k_ik_j} \leq 0, i\neq j$ which implies the statement of the theorem. 
\end{proof}

To describe the properties of the resistance metric associated with a connected circular electrical network, we will provide a formula for the embedding of $E_n$ into $\mathrm{Gr}_{\geq 0}(n-1, 2n)$ described in Theorem \ref{th: main_gr} which uses the effective resistances matrix instead of the response matrix.

Let $\mathcal E$ be a connected network with the resistance matrix $R_{\mathcal E}$, define a point in $Gr(n-1,2n)$ associated to it as the row space of the matrix:
\begin{equation} \label{eq:omega_n,r}
  \Omega_{R}(\mathcal E)=\left(\begin{matrix}
1 & m_{11} & 1 &  -m_{12} & 0 & m_{13} & 0 & \ldots  \\
0 & -m_{21} & 1 & m_{22} & 1 & -m_{23} & 0 & \ldots \\
0 & m_{31} & 0 & -m_{32} & 1 & m_{33} & 1 & \ldots \\
\vdots & \vdots & \vdots & \vdots & \vdots & \vdots &  \vdots & \ddots 
\end{matrix}\right),
\end{equation}
where 
$$m_{ij}= -\frac{1}{2}(R_{i,j}+R_{i+1,j+1}-R_{i,j+1}-R_{i+1,j}).$$
Notice that the matrix $M(R_{\mathcal E})=(m_{ij})$ is symmetric and the sum of the matrix entries in each row is zero. In other words, it looks like a response matrix of an electrical network.
There is a reason for this as it was discovered by R. Kenyon and D. Wilson.
\begin{theorem} \cite{KW 2011} \label{ken-wen}
    Let $\mathcal E$ be a connected circular electrical network and $\mathcal E^{*}$ be its dual, then the following holds:
    \begin{equation} \label{form:xij}
    x^{*}_{ij}=-\frac{1}{2}(R_{i,j}+R_{i+1,j+1}-R_{i,j+1}-R_{i+1,j})
    \end{equation}
     \begin{equation}  \label{form:rij}
   R^{*}_{ij}=-\sum \limits_{i'<j': \ D_{S_{ij}}(i', j') \neq 0}x_{i'j'},
    \end{equation}
where $x^*_{ij}$ are the matrix elements of the response matrix of the dual network $\mathcal E^*$. 
\end{theorem}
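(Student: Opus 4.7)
The plan is to prove both identities by translating their ingredients into weighted counts of spanning trees and two-component spanning forests of $\Gamma$ and then invoking planar duality between $\Gamma$ and $\Gamma^*$ with the reciprocal-conductance rule. For formula \eqref{form:xij}, Kirchhoff's formula (Theorem \ref{KF}) lets me write each $R_{kl}$ as $T(\Gamma/kl;\omega)/T(\Gamma;\omega)$, so
\[\tfrac{1}{2}\bigl(R_{i,j}+R_{i+1,j+1}-R_{i,j+1}-R_{i+1,j}\bigr)\]
becomes, after clearing the common denominator $T(\Gamma;\omega)$, a signed sum of four tree generating functions of contracted graphs. By the standard inclusion–exclusion for groves, this signed sum equals the generating function of two-component spanning forests of $\Gamma$ in which the four nodes $i,i+1,j,j+1$ are split as $\{i,j\}$ in one tree and $\{i+1,j+1\}$ in the other. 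Planar duality – complementing the edge set of each forest inside the dual graph – sends these two-component forests bijectively onto spanning trees of $\Gamma^*$ that traverse the dual edge separating the boundary arc $(i,i+1)$ from $(j,j+1)$, i.e. the edge between the dual nodes $i^*$ and $j^*$. Kirchhoff's formula once more, applied to $\Gamma^*$ with conductances $\omega^*(e^*)=1/\omega(e)$, identifies this count with $-x^*_{ij}$ (the extra conductance factor coming out of the weight reciprocation cancels between the numerator and the tree-generating function of $\Gamma^*$ itself), giving \eqref{form:xij}.

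For formula \eqref{form:rij} the key observation is that \eqref{form:xij} already expresses $x^*_{ij}$ as a discrete mixed second difference of $R$. Applying it to the network $\mathcal E^*$ and using $(\mathcal E^*)^*\cong \mathcal E$, after the prescribed cyclic shift of node labels, yields
\[x_{i'j'} \;=\; -\tfrac{1}{2}\bigl(R^*_{i',j'}+R^*_{i'+1,j'+1}-R^*_{i',j'+1}-R^*_{i'+1,j'}\bigr),\]
so the response entries of $\mathcal E$ are mixed second differences of the dual resistance matrix. Inverting a mixed second-difference operator amounts to a double telescoping summation: I sum $x_{i'j'}$ over the rectangle of index pairs $(i',j')$ whose differencing square lies inside the dual arc separating $i^*$ from $j^*$. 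The boundary contributions of the telescoping vanish because $R^*_{kk}=0$ and, at the other end of each sum, by the vanishing row sum of Theorem \ref{Set of response matrices all network}. The surviving double sum is exactly $-\sum x_{i'j'}$ taken over those pairs $(i',j')$ with $D_{S_{ij}}(i',j')\neq 0$, once one recognises $S_{ij}$ as the circular pair demarcating the dual arc between $i^*$ and $j^*$.

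The hardest step is the planar-duality bookkeeping in the first paragraph: verifying that edge-complementation really sends the prescribed boundary partition of $\Gamma$ to spanning trees of $\Gamma^*$ threaded through the correct dual edge, and that the weight transformation $\omega^*(e^*)=1/\omega(e)$ produces exactly the overall factor $\prod_e \omega(e)$ that cancels between the two Kirchhoff ratios, leaving no spurious constants. Equally delicate in the second paragraph is identifying the natural summation range produced by the double telescoping with the set $\{(i',j'):D_{S_{ij}}(i',j')\neq 0\}$; this requires unpacking the combinatorics of the circular pair $S_{ij}$ – essentially, the nodes lying on one side of the chord from $i^*$ to $j^*$ on the dual circle – and checking that the sign conventions of the response matrix match those of the telescoping sum.
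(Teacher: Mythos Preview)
The paper does not give a proof of this theorem: it is quoted verbatim from \cite{KW 2011} with no argument supplied. So there is nothing to compare your attempt against in the paper itself; one can only compare it to the Kenyon--Wilson original, whose method is indeed Kirchhoff's formula plus grove duality, the same circle of ideas you are invoking.

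That said, your sketch contains two concrete errors in the duality bookkeeping that would derail a full proof.

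First, the inclusion--exclusion step is mis-stated. Writing $F_2(A\mid B)$ for the weighted count of two-component spanning forests with the listed nodes in the indicated components, one has
\[
\tfrac12\bigl(R_{i,j}+R_{i+1,j+1}-R_{i,j+1}-R_{i+1,j}\bigr)\cdot T(\Gamma;\omega)
\;=\;F_2(i,j{+}1\mid i{+}1,j)\;-\;F_2(i,j\mid i{+}1,j{+}1),
\]
and it is the \emph{second} term that vanishes by planarity (it is the crossing pairing). The surviving partition is $\{i,j{+}1\}\,|\,\{i{+}1,j\}$, not $\{i,j\}\,|\,\{i{+}1,j{+}1\}$ as you wrote; by a further planarity argument this forces the full boundary to split into the two contiguous arcs $\{i{+}1,\dots,j\}$ and $\{j{+}1,\dots,i\}$.

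Second, the duality step is wrong as stated. Edge-complementation sends a grove of $\Gamma$ with boundary partition $\pi$ to a grove of $\Gamma^*$ with the Kreweras complement $\pi^*$. A two-block non-crossing partition of $n$ primal nodes has an $(n{-}1)$-block complement on the dual nodes, namely the partition with $\bar\imath,\bar\jmath$ together and all other $\bar k$ singletons. Those are exactly the groves whose generating function gives $-x^*_{\bar\imath\bar\jmath}$ (via the Schur-complement/matrix-tree description of the response matrix). They are \emph{not} spanning trees of $\Gamma^*$, and there is in general no single ``dual edge between $i^*$ and $j^*$'' to traverse. Once you replace your spanning-tree picture with the correct $(n{-}1)$-component grove picture, the weight factor $\prod_e\omega(e)$ cancels as you anticipated and \eqref{form:xij} follows.

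For \eqref{form:rij}, your idea of applying \eqref{form:xij} to $\mathcal E^*$ and telescoping the mixed second differences is sound and is essentially what the paper sketches in Remark~\ref{KW}. But your stated reason for the vanishing of the telescoping boundary terms (``the vanishing row sum of Theorem~\ref{Set of response matrices all network}'') refers to a property of the \emph{response} matrix, whereas the quantities being summed are resistances $R^*$; the boundary terms vanish simply because the four corner values of the double telescoping collapse to the single term $R^*_{\bar\imath\bar\jmath}$ using $R^*_{kk}=0$ and the circular identification of indices. You should also track the index shift from $(\mathcal E^*)^*=\mathcal E'$ explicitly, as the paper does in \eqref{formdual}, or the range of summation will be off by one.
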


Since $M(R_{\mathcal E})$ is a degenerate matrix, the Plücker coordinates of a point $Gr(n-1, 2n)$ associated with $\Omega_{R}(\mathcal E)$ can be calculated as the maximal size minors of the matrix $\Omega'_{ R}(\mathcal E)$ obtained from $\Omega_{R}(\mathcal E)$  by deleting, for example, the last row.

\begin{theorem}  \cite{BGGK} \label{th:aboout omeganr}
The row space of $\Omega_{R}(\mathcal E)$ defines the same point in the $Gr_{\geq 0}(n-1,2n)$ as the point $\Omega(\mathcal E)$ defined by the Lam embedding, see Theorem \ref{th: main_gr}. In particular, the Plücker coordinate $\Delta_{24\dots 2n-2}(\Omega_{R}(\mathcal E))$ is not 0 if and only if $\mathcal E$ is a connected circular electrical network.

Putting it together
\[ 
\Omega_{R}(\mathcal E)=\Omega(\mathcal E^{*})s=\Omega(\mathcal E)\] 
where $s \in \Mat_{2n \times 2n}$ is the shift operator
\[s=\left(\begin{matrix}
0 & 1 &  0 & 0 & \cdots & 0 \\
0 & 0 & 1 & 0 & \cdots & 0 \\
0 & 0 & 0 & 1 & \cdots & 0 \\
\vdots & \vdots & \vdots & \vdots & \ddots & \vdots \\
0 & 0 & 0 & 0 & \cdots & 1 \\
(-1)^{n} & 0 & 0 & 0 & \cdots & 0
\end{matrix}\right),\]\label{cyclic operator s}
See the proof of \cite[Theorem 5.6]{BGGK} for more details.
\end{theorem}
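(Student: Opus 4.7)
The strategy is to reduce the theorem to two separate facts: (i) a literal matrix equality $\Omega_R(\mathcal E)=\Omega(\mathcal E^*)\, s$, and (ii) an equivariance of the Lam embedding under electrical duality and the cyclic shift on the Grassmannian side.

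For step (i), the first move is to apply Theorem \ref{ken-wen} of Kenyon--Wilson. Formula \eqref{form:xij} asserts exactly that $x^*_{ij}=-\tfrac{1}{2}(R_{i,j}+R_{i+1,j+1}-R_{i,j+1}-R_{i+1,j})$, which is precisely the definition of the entries $m_{ij}$ in \eqref{eq:omega_n,r}. Hence $M(R_{\mathcal E})=M_R(\mathcal E^*)$, and the rows of $\Omega_R(\mathcal E)$ are built from the same data as $\Omega(\mathcal E^*)$, but in a shifted layout. An elementary column-by-column check then shows that right multiplication of $\Omega(\mathcal E^*)$ by $s$ moves the entry in column $2n$ of each row to column $1$ with a sign $(-1)^n$ and translates the other entries one column to the right. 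In particular, the $(-1)^n\cdot(-1)^n=1$ appearing in the top-left of $\Omega(\mathcal E^*)s$ matches the leading $1$ in the first row of $\Omega_R(\mathcal E)$, and an induction on the row index completes the verification that $\Omega_R(\mathcal E)=\Omega(\mathcal E^*)\, s$ as matrices.

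For step (ii), the main obstacle, I would show that the row span of $\Omega(\mathcal E^*)\, s$ coincides with the row span of $\Omega(\mathcal E)$. This is the assertion that the Lam embedding intertwines the duality $\mathcal E\mapsto \mathcal E^*$ with the cyclic shift action $s$ on the Grassmannian. The cleanest route is to invoke \cite[Theorem 5.6]{BGGK}, where this equivariance is established by identifying both sides with the row span of a canonical matrix attached to $\mathcal E$. A self-contained argument would instead compare all maximal minors of the two matrices, using the reciprocity formula \eqref{form:rij} of Kenyon--Wilson to relate the circular minors of $M_R(\mathcal E)$ to those of $M_R(\mathcal E^*)$; this is a lengthy but mechanical calculation and should be deferred to the cited reference.

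The final assertion about the Plücker coordinate is an immediate corollary. Since $\Omega_R(\mathcal E)$ and $\Omega(\mathcal E)$ represent the same point in $\mathrm{Gr}_{\geq 0}(n-1,2n)$, their Plücker coordinates agree up to a common nonzero scalar. By Theorem \ref{th: main_gr} the non-vanishing of $\Delta_{24\ldots 2n-2}(\Omega(\mathcal E))$ is exactly the criterion that separates $E_n$ from the boundary stratum of disconnected cactus networks inside $\overline{E}_n$. Hence $\Delta_{24\ldots 2n-2}(\Omega_R(\mathcal E))\neq 0$ if and only if $\mathcal E$ is a connected circular electrical network, which is the stated Plücker characterization.
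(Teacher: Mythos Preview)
Your proposal is correct and mirrors the paper's own treatment: the paper does not give an independent proof but states the identity $\Omega_R(\mathcal E)=\Omega(\mathcal E^{*})s=\Omega(\mathcal E)$ and defers to \cite[Theorem~5.6]{BGGK}, exactly as you do for step~(ii); your step~(i) is the natural unpacking of Theorem~\ref{ken-wen}, which the paper also invokes implicitly by placing that result just before the present theorem. One small caution on step~(iii): Theorem~\ref{th: main_gr} characterizes $E_n$ by the simultaneous non-vanishing of \emph{both} $\Delta_{24\ldots 2n-2}$ and $\Delta_{13\ldots 2n-3}$, not by $\Delta_{24\ldots 2n-2}$ alone, so your phrasing ``is exactly the criterion'' overstates what that theorem gives; in practice this is harmless here because once $\Omega_R(\mathcal E)$ is formed from an honest resistance matrix the network is already connected, but you should either note this or observe directly that the odd-column minor of $\Omega_R$ is identically~$1$.
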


Many interesting inequalities involving $R_{ij}$ follow from the positivity of the Plücker coordinates of the point represented by $\Omega_{R}(\mathcal E)$. For some of them we have found an explicit combinatorial meaning, others are still waiting to be interpreted.

Below we will deduce the Kalmanson property for the metric $R_{\mathcal E}$ as a consequence of the positivity described above. This fact was established in  \cite{F} earlier using different methods.   

\begin{theorem} \label{charkalm}
 Let $\mathcal E$ be a connected circular electrical network and let  $i_1, i_2, i_3, i_4$ be any four nodes in the circular order as it is shown in the Figure \ref{kalmanson}. Then the  Kalmanson inequalities hold:
      \begin{equation} \label{kal_1}
      R_{i_1i_3}+R_{i_2i_4}\geq R_{i_2i_3}+R_{i_1i_4},     
      \end{equation}
      \begin{equation} \label{kal_2}
      R_{i_1i_3}+R_{i_2i_4}\geq R_{i_1i_2}+R_{i_3i_4}.    
      \end{equation}
\end{theorem}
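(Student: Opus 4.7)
The plan is to derive both Kalmanson inequalities by a double telescoping reduction to a single elementary inequality supplied by Theorem \ref{ken-wen}. By that theorem,
\[
m_{ij}\;=\;-\tfrac{1}{2}\bigl(R_{i,j}+R_{i+1,j+1}-R_{i,j+1}-R_{i+1,j}\bigr)
\]
is the $(i,j)$ entry of the response matrix of the dual circular network $\mathcal E^{*}$ (indices taken cyclically modulo $n$). Since any response matrix has non-positive off-diagonal entries by Theorem \ref{Set of response matrices all network}, we immediately obtain the \emph{local inequality}
\[
R_{i,j}+R_{i+1,j+1}\;\ge\;R_{i,j+1}+R_{i+1,j}\qquad\text{whenever }i\not\equiv j\pmod n.
\]
This is the only analytic input; everything after it is combinatorial.

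The second step is the purely algebraic telescoping identity
\[
\sum_{i=a}^{b-1}\sum_{j=c}^{d-1}\!\bigl(R_{i,j}+R_{i+1,j+1}-R_{i,j+1}-R_{i+1,j}\bigr)\;=\;R_{a,c}+R_{b,d}-R_{a,d}-R_{b,c},
\]
valid for any integers $a<b\le c<d$, obtained by summing the discrete mixed second differences in $j$ first and then in $i$. Specialising to $(a,b,c,d)=(i_1,i_2,i_3,i_4)$, the range of summation forces $i\le b-1<c\le j$ and hence $i\ne j$, so the local inequality makes every summand on the left non-negative; inequality \eqref{kal_1} follows.

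For \eqref{kal_2} I would apply the same identity cyclically, with $(a,b,c,d)=(i_2,i_3,i_4,i_1+n)$ and $R_{i_k,\,i_1+n}:=R_{i_k,i_1}$. The double sum then runs over $i\in\{i_2,\dots,i_3-1\}$ and $j\in\{i_4,\dots,i_1+n-1\}$, with $j$ reduced modulo $n$ inside $R$ and $m$; each residue $j\bmod n$ lies in $\{i_4,\dots,n\}\cup\{1,\dots,i_1-1\}$, which is disjoint from $\{i_2,\dots,i_3-1\}$ by the non-overlapping circular order $i_1<i_2<i_3<i_4$. Hence every $m_{i,\,j\bmod n}$ invoked is genuinely off-diagonal and the local inequality applies term-by-term, giving \eqref{kal_2}. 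The one place where a careless argument could slip is precisely this cyclic bookkeeping: one must verify that no diagonal $m_{ii}$ enters the telescoped sum, since Theorem \ref{Set of response matrices all network} supplies no sign control on the diagonal. The non-overlapping circular arrangement of the four nodes is exactly the hypothesis that rules this out, so the main (and only real) obstacle reduces to checking the index ranges.
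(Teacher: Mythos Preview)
Your argument is correct and takes a genuinely different route from the paper's. The paper passes to the four-node network $\mathcal E_{i_1i_2i_3i_4}\in E_4$ and reads the two Kalmanson differences off as the Pl\"ucker coordinates $\Delta_{123}$ and $\Delta_{567}$ of the point $\Omega'_{R}(\mathcal E_{i_1i_2i_3i_4})$ in $\mathrm{Gr}_{\geq 0}(3,8)$, so that non-negativity follows from Theorem~\ref{th:aboout omeganr}. You instead invoke Theorem~\ref{ken-wen} once, obtain the adjacent-index inequality from the sign of the off-diagonal of $M_R(\mathcal E^{*})$, and telescope. This is more elementary: it uses only the Kenyon--Wilson formula and the basic fact that off-diagonal response-matrix entries are non-positive, and avoids the Grassmannian machinery. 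It is essentially the alternative the paper alludes to in Remark~\ref{KW}, and it has the added virtue of exhibiting each Kalmanson excess as the block sum $\sum_{i,j}(-x^{*}_{ij})$, which is precisely the split-decomposition weight of Theorem~\ref{th:decomp}. The paper's proof, on the other hand, ties the inequalities directly to the totally non-negative Grassmannian, which is the thematic point of the paper. Your cyclic bookkeeping for \eqref{kal_2} is handled correctly: after the WLOG assumption $i_1<i_2<i_3<i_4$, the index sets $\{i_2,\dots,i_3-1\}$ and $\{i_4,\dots,n\}\cup\{1,\dots,i_1-1\}$ are disjoint, so no diagonal entry $x^{*}_{ii}$ appears.
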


\begin{proof}
    Let $\mathcal E_{i_1i_2i_3i_4}$ be an electrical network obtained from $\mathcal E$ by declaring the vertices $i_1, i_2, i_3, i_4$ the boundary nodes, and the rest of the vertices are internal. Obviously $\mathcal E_{i_1i_2i_3i_4}$ is a connected  circular electrical network in $E_4$ and let  $\Omega'_{R}(\mathcal E_{i_1i_2i_3i_4})$ be its matrix defined by the formula \ref{eq:omega_n,r}.
    By direct computations we conclude that

    $$\Delta_{567}\bigl(\Omega'_{ R}(\mathcal E_{i_1i_2i_3i_4})\bigr)=\frac{1}{2}(R_{i_1i_3}+R_{i_2i_4} -R_{i_1i_2}-R_{i_3i_4}),$$
     $$\Delta_{123}\bigl(\Omega'_{R}(\mathcal E_{i_1i_2i_3i_4})\bigr)=\frac{1}{2}(R_{i_1i_3}+R_{i_2i_4}- R_{i_2i_3}-R_{i_1i_4}).$$
     Taking into account that all the minors $\Delta_{I}\bigl(\Omega'_{ R}(\mathcal E_{i_1i_2i_3i_4})\bigr)$ are non-negative we obtain that the Kalmanson inequalities hold for the metric defined by the resistances.
\end{proof}
    \begin{figure}[h!]
\center
\includegraphics[width=60mm]{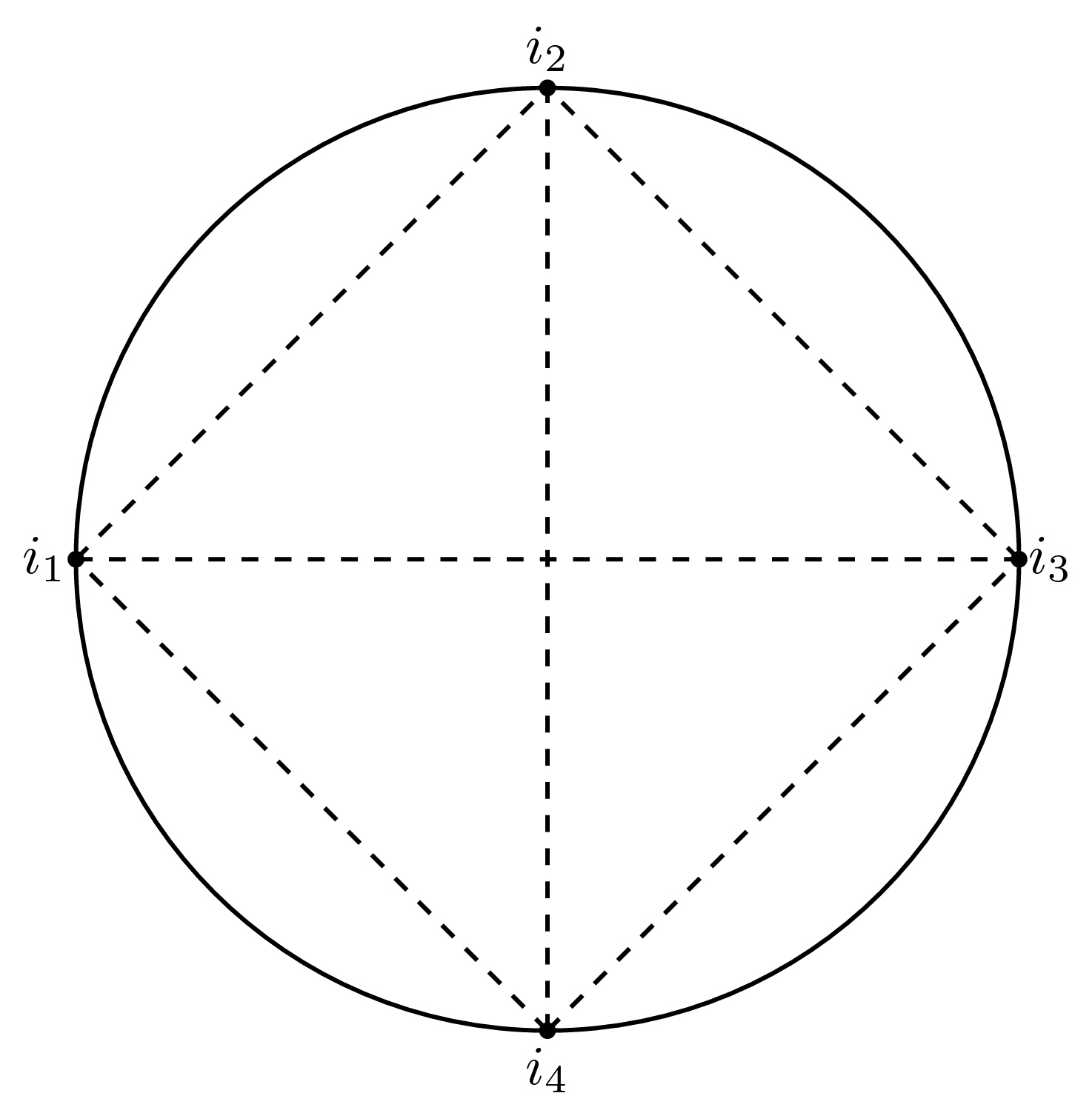}
\caption{$4$-point Kalmanson property}
\label{kalmanson}
\end{figure}
\begin{remark}
Note that the equations \ref{kal_1}, \ref{kal_2} together are equivalent to the tropical inequality:
\[
    R_{i_{1}i_{3}}+R_{i_{2}i_{4}}\geq\max(R_{i_{2}i_{3}}+R_{i_{1}i_{4}},\,R_{i_{1}i_{2}}+R_{i_{3}i_{4}})
    \]
\end{remark}
\begin{definition} Let $X$ be a finite set and $D$ is a metric on it. If there is a circular order on $X$ such that the inequalities from the theorem above hold for any four points of $X$ in this circular order, we call this metric the {\it Kalmanson metric}.
\end{definition}
Therefore the resistance metric $R_{\mathcal E}$ defined by a circular electrical network $\mathcal E$ is a Kalmanson metric.

\subsection{Circular split systems and electrical networks} \label{sec:splitmetr}
\begin{definition}
    A {\it split} $S$ of a set $X=\{1, \dots, n\}$  is a partition of  $X$ into two non-empty, disjoint subsets  $A$ and $B$, $ A \sqcup B = X$. A split is called trivial if either $A$ or $B$ has cardinality $1.$
    A  collection of splits $\mathcal{S}$ is called a {\it split system }.
\end{definition}

The pseudo metric associated with a split $S$  is defined by the following matrix $D:$
    \begin{equation*}
        D_{A|B}(k,l)=\begin{cases}
             1, \ \text{if} \ |A \cap \{k,l\}|=1, \\
             0, \ \text{otherwise.}
        \end{cases}
    \end{equation*}

A circular order of $X$ can be drawn as a polygon with the elements of $X$ labeling the sides. A {\it circular split system} is a split system for which a circular order exists such that all the splits can be simultaneously drawn as sides or diagonals of the labeled polygon. Trivial splits are sides of the polygon, separating the label of that side from the rest, while a non-trivial split $A|B$ is a diagonal separating the sides labeled by $A$ and $B$. For any circular split system we can visualize it by such a polygonal representation, or instead choose a visual representation using sets of parallel edges for each split; these representations are called {\it circular split networks}. A set of parallel edges displays a split $A|B$ if the removal of those edges leaves two connected components with respective sets of terminals $A$ and $B$, see the Figure \ref{splex}. 
\begin{definition} We call a {\it weighted circular split system} a circular split system with a positive weight attached to each set of parallel edges, which display the splits. Such a weighted circular split system defines a pseudo metric on the set $X$ by the formula
\begin{equation} \label{decom1}
        D_S=\sum \limits_{A|B \in S } \alpha_{A|B}D_{A|B},
    \end{equation}
where $\alpha_{A|B}$ is the weight of the set of parallel edges which defines the split $A|B$.
\end{definition}
The split $(\{1,2,3,4\}|\{5,6,7,8\})$ of the set of the sides of the polygon on the left  defined by the yellow diagonal in the Figure \ref{splex} corresponds to the set of parallel edges with the weight $\alpha_1$ in the graph on the left.
\begin{figure}[h!]
\center
\includegraphics[width=90mm]{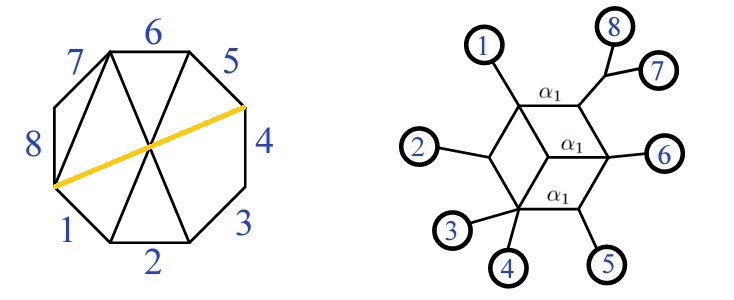}
\caption{Circular split system and their polygon representations. Removing any set of parallel edges defines a split of the set of leaves of the graph on the right which coincides with the split of the set of the sides of a polygon on the left obtained by removing an appropriate diagonal.}
\label{splex}
\end{figure}

The following theorem gives a beautiful characterization of the set of Kalmanson metrics \cite{BD}.
\begin{theorem} \label{BD} A metric $d$ is a Kalmanson metric with respect to a circular order $c$ if and only if $d = D_S$ for a unique weighted circular split system $S$, (not necessarily containing all trivial splits) with each split $A|B$ of $S$ having both parts contiguous in that circular order $c$.
\end{theorem}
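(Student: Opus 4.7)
My plan is to treat the two implications separately along with the uniqueness claim. The forward implication reduces to an elementary enumeration for a single contiguous split. For it, suppose $d = D_S = \sum_{A|B \in S} \alpha_{A|B} D_{A|B}$ with positive weights and each part of each split contiguous in the circular order $c$. Since the Kalmanson inequalities are linear in $d$, it suffices to verify them for one split metric $D_{A|B}$. Fix four elements $i_1, i_2, i_3, i_4$ appearing in the order $c$. Because $A$ is contiguous in $c$, the trace $A \cap \{i_1, i_2, i_3, i_4\}$ is itself an arc in the induced cyclic order on the four points, so it equals $\emptyset$, a singleton, a cyclically consecutive pair such as $\{i_1, i_2\}$ or $\{i_4, i_1\}$, a cyclically consecutive triple, or the whole set. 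Crucially the crossing traces $\{i_1, i_3\}$ and $\{i_2, i_4\}$ are excluded by contiguity. A direct case analysis shows that in each admissible case
$$D_{A|B}(i_1, i_3) + D_{A|B}(i_2, i_4) \geq \max\bigl(D_{A|B}(i_1, i_2) + D_{A|B}(i_3, i_4),\, D_{A|B}(i_1, i_4) + D_{A|B}(i_2, i_3)\bigr),$$
and summing over $S$ with positive weights gives the Kalmanson inequalities for $d$.

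For the reverse direction I would assume $d$ is Kalmanson with respect to $1, 2, \dots, n$. For each pair $1 \leq i < j \leq n$ let $S_{ij}$ denote the contiguous split $\{i{+}1, \dots, j\} \,|\, \{j{+}1, \dots, n, 1, \dots, i\}$, with the convention that $j{+}1$ is read cyclically when $j = n$, and define
$$\alpha_{ij} := \tfrac{1}{2}\bigl(d(i, j) + d(i{+}1, j{+}1) - d(i, j{+}1) - d(i{+}1, j)\bigr),$$
setting $d(k,k) = 0$ in the boundary quadruples, which correspond to trivial splits. The Kalmanson inequality applied to the circularly ordered quadruple $i, i{+}1, j, j{+}1$ shows $\alpha_{ij} \geq 0$. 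To establish $d = \sum_{i<j} \alpha_{ij} D_{S_{ij}}$, fix $(k, l)$ with $k < l$; the split $S_{ij}$ separates $k$ and $l$ precisely when exactly one of $k, l$ lies in the arc $\{i{+}1, \dots, j\}$. Expanding the sum over all such $(i,j)$ and rearranging the four terms in each $\alpha_{ij}$, the resulting double sum telescopes in both $i$ and $j$ and collapses to $d(k,l)$.

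For uniqueness, the number of contiguous splits of the $n$-cycle (including trivial ones) equals $\binom{n}{2}$, which matches the dimension of the space of symmetric real functions on $X \times X$ vanishing on the diagonal. The inversion formula of the previous paragraph expresses each coefficient $\alpha_{ij}$ as an explicit linear functional of $d$, which immediately gives linear independence of the metrics $D_{S_{ij}}$ and hence uniqueness of the weighted decomposition. The step I expect to require the most care is the telescoping identity in the existence part: one must correctly identify, for each fixed pair $(k,l)$, the family of contiguous splits that separate them and track how the four-term cross differences $d(i,j) + d(i{+}1, j{+}1) - d(i, j{+}1) - d(i{+}1, j)$ combine. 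Once the correct cyclic enumeration is in hand, the identity reduces to a finite two-variable telescoping, but this combinatorial bookkeeping is the genuine content of the converse.
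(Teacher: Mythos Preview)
The paper does not prove this theorem; it is quoted from Bandelt--Dress with only a citation, and the explicit weight formula you use is likewise recorded without proof in the paper's subsequent Theorem~\ref{th:decomp}. So there is no ``paper's own proof'' to compare against beyond confirming that your coefficients $\alpha_{ij}=\tfrac12\bigl(d(i,j)+d(i{+}1,j{+}1)-d(i,j{+}1)-d(i{+}1,j)\bigr)$ coincide with the paper's $\omega_{ij}$.

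Your argument is correct and is the standard one. The forward direction is fine as written. For the converse, the non-negativity of $\alpha_{ij}$ indeed follows from the second Kalmanson inequality applied to $i,i{+}1,j,j{+}1$ when these are four distinct points, and from the triangle inequality in the degenerate cases $j=i{+}1$ or $(i,j)=(1,n)$. The telescoping you flagged as the delicate step goes through cleanly: since $2\alpha_{ij}$ is the discrete mixed second difference of $d$, summing over any rectangle $a\le i<b$, $c\le j<e$ collapses to $d(a,c)-d(a,e)-d(b,c)+d(b,e)$. For fixed $k<l$ the separating pairs $(i,j)$ form exactly the two rectangles $1\le i<k$, $k\le j<l$ and $k\le i<l$, $l\le j\le n$; with the cyclic identification $n{+}1\equiv 1$ their contributions are $d(1,k)-d(1,l)+d(k,l)$ and $d(k,l)-d(k,1)+d(l,1)$, which add to $2d(k,l)$. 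Your uniqueness argument by dimension count and explicit inversion is also correct.
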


Since the resistance metric defined by a planar circular electrical network satisfies the Kalmanson condition, it corresponds to a unique weighted circular split system, which we call an {\it electrical circular split system} following \cite{F} or, given Theorem \ref{charkalm}, an {\it electrical Kalmanson metric}.

We will introduce a slightly different way of labeling the splits, it will be useful for us later.
\begin{definition} \label{circ-system}
Let  $X$ be a set of nodes on a circle labeled clockwise by the symbols $1$ to  $n$. Define the dual  set $X^d$ consisting of nodes labeled by the symbols $\overline{1}$ to  $\overline{n}$ in such a way that each  $\overline{j}$ lies between $j$ and  $j+1.$ Then each chord  connecting $\overline{i}$ and $\overline{j}$ defines a split of $X$ which we will denote by $S_{ij}$.

It is obvious that the set $S_{ij}$ forms a circular split system which we will denote by $\mathcal{S}_c$. 

\end{definition}
The following theorem gives the formula for the weights of a weighted circular split systems \ref{BD}. 
\begin{theorem} \cite{GandC}, \cite{HKP} \label{th:decomp}
    Let $X$ be a set as in Definition \ref{circ-system} and let $D=(d_{ij})$ be a Kalmanson metric defined on $X$. Then the following split decomposition holds:
    \begin{equation} \label{decom}
        D=\sum \limits_{S_{ij} \in \mathcal{S}_c } \omega_{ij}D_{S_{ij}},
    \end{equation}
    where the coefficients  $\omega_{ij}$ are defined by the formula $$\omega_{ij}=\frac{1}{2}(d_{i,j}+d_{i+1,j+1}-d_{i,j+1}-d_{i+1,j}).$$
\end{theorem}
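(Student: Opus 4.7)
The plan is to split the statement into two essentially independent assertions: first, the purely linear identity that the right-hand side of \eqref{decom} evaluated at any pair $(k,l)$ reproduces $d_{kl}$; second, the non-negativity $\omega_{ij}\ge 0$, which upgrades that formal identity to a genuine weighted split decomposition in the sense of Section \ref{sec:splitmetr}. Uniqueness of the coefficients then follows from Theorem \ref{BD}, which asserts that at most one weighted circular split system can represent a given Kalmanson metric.

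For the identity, fix $k<l$ in $\{1,\dots,n\}$ and note that, by Definition \ref{circ-system}, the split $S_{ij}$ separates $k$ from $l$ precisely when $i$ runs over $\{k,k+1,\dots,l-1\}$ while $j$ runs over the complementary cyclic arc $\{l,l+1,\dots,k-1\}$; since these two index ranges are disjoint, each unordered chord is counted exactly once in the double sum. I would then rewrite
\[
\omega_{ij}=\tfrac12\bigl[(d_{ij}-d_{i,j+1})+(d_{i+1,j+1}-d_{i+1,j})\bigr]
\]
as a discrete mixed difference and sum first over $j$ along the cyclic arc from $l$ to $k-1$. Both pieces telescope, yielding
\[
\sum_{j}\omega_{ij}=\tfrac12\bigl[(d_{il}-d_{ik})+(d_{i+1,k}-d_{i+1,l})\bigr].
\]
Summing next over $i$ from $k$ to $l-1$ telescopes the remaining differences to $d_{kl}-d_{ll}$ and $d_{lk}-d_{kk}$; invoking $d_{kk}=d_{ll}=0$ together with the symmetry $d_{lk}=d_{kl}$ collapses the expression to $\tfrac12(d_{kl}+d_{kl})=d_{kl}$, as required. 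The wrap past $n$ back to $1$ causes no new issue because the defining formula for $\omega_{ij}$ is cyclically invariant by construction.

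The second assertion $\omega_{ij}\ge 0$ is exactly the Kalmanson inequality \eqref{kal_1} applied to the circularly ordered quadruple $(i,i+1,j,j+1)$; in the degenerate case $j=i+1$ the quantity $\omega_{ij}$ reduces to $\tfrac12(d_{i,i+1}+d_{i+1,i+2}-d_{i,i+2})$, which is non-negative by the triangle inequality for the metric $D$. The main obstacle I anticipate is purely combinatorial bookkeeping: one must check that each separating chord appears with multiplicity exactly one in the telescoping double sum and that the cyclic conventions remain consistent when the summation arc wraps past the index $n$. No deeper input is required, so once this bookkeeping is handled cleanly and \eqref{kal_1} is invoked for positivity, the proof is complete.
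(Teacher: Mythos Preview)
Your proof is correct. The double telescoping over the two cyclic arcs indeed collapses the sum of the weights $\omega_{ij}$ of separating chords to $d_{kl}$, and the non-negativity of each $\omega_{ij}$ is precisely one instance of the Kalmanson four-point inequality (degenerating to the triangle inequality when $j=i+1$). Your appeal to Theorem~\ref{BD} for uniqueness is harmless but not strictly needed, since the statement of Theorem~\ref{th:decomp} only asserts that the decomposition holds with the \emph{given} coefficients.

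Regarding the comparison: the paper does not supply its own proof of Theorem~\ref{th:decomp}; it records the result with attributions to \cite{GandC} and \cite{HKP}. The only related argument in the paper is Remark~\ref{KW}, which re-derives the identity \eqref{decom} in the special case $D=R_{\mathcal E}$ by exploiting Kenyon--Wilson duality (Theorem~\ref{ken-wen}) and the fact that $\mathcal E\mapsto\mathcal E^{*}$ is an involution up to a cyclic shift. That route is specific to resistance metrics and uses the electrical machinery, whereas your telescoping argument is purely combinatorial and works for any symmetric array $(d_{ij})$ with zero diagonal (Kalmanson is invoked only for positivity, not for the identity itself). So your approach is more elementary and more general than anything the paper provides internally.
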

\begin{remark}\label{KW} The formula \eqref{decom} for $R_{ij}$ can be obtained from Theorem \ref{ken-wen} since taking the dual electrical network is an involution up to a shift: $\mathcal E^{**}=\mathcal E'$, where $\mathcal E'$ is obtained from $\mathcal E$ by shifting the numeration of the nodes of $\mathcal E$ by $1$ clockwise:

 \begin{align}  \label{formdual}
 R_{ij}=R^{**}_{i-1j-1}=-\sum \limits_{\bar{i}'<\bar{j}': \ D_{S_{i-1j-1}}(\bar{i}', \bar{j}') \neq 0}x^{*}_{\bar{i}'\bar{j}'}=-\sum \limits_{i'<j': \ D_{S_{ij}}(i', j') \neq 0}x^{*}_{i'j'}= \\ \nonumber
 \hspace{-40mm}    {\sum \limits_{i'<j': \ D_{S_{ij}}(i', j') \neq 0}\frac{1}{2}(R_{i,j}+R_{i+1,j+1}-R_{i,j+1}-R_{i+1,j})},
\end{align}  
where $\{1, \dots, n\}$ and $\{\overline1, \dots, \overline n\}$ are the labels of the nodes of $\mathcal E$ and $\mathcal E^{*}$ respectively. Thus  Theorem \ref{ken-wen} in fact provides an alternative way to observe that the effective resistance distance is the Kalmanson metric. 
\end{remark}
\begin{figure}[H]
\center
\includegraphics[width=80mm]{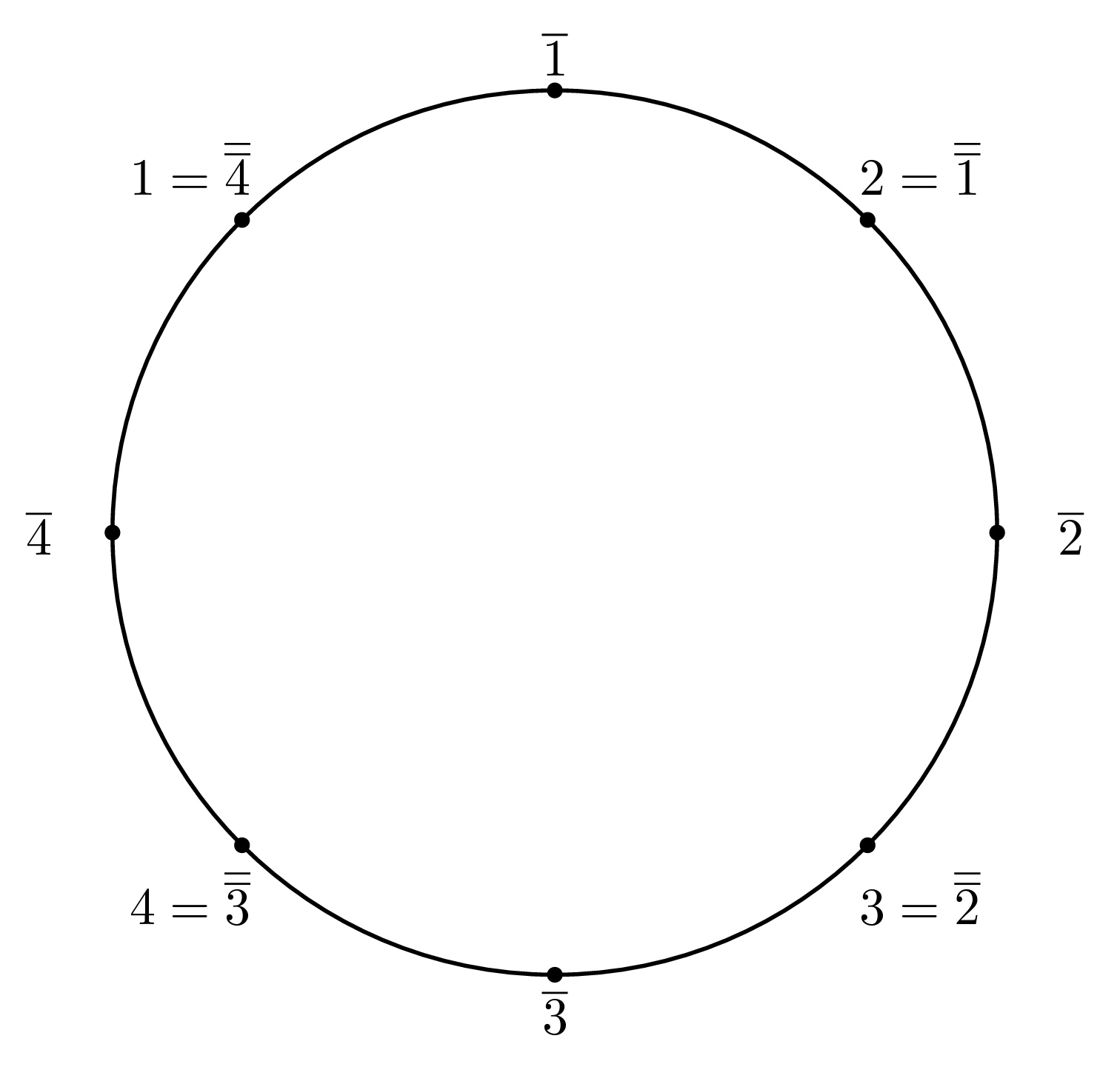}
\caption{Labeling in the formula \eqref{formdual} }
\label{fig:treedaul}
\end{figure}
\begin{definition}
Define a matrix $M(D)$
\begin{align}
    M(D)_{ij} = \begin{dcases*}
        \sum_{k \not= i}\omega_{ik}, & \text{if } i = j,\\
        -{\omega_{ij}}, & \text{if } $i \not= j,$  
        \end{dcases*}
\end{align}  
Notice that $\sum_{k \not= i}\omega_{ik}=-\omega_{ii}=d_{ii+1}.$
\end{definition}

Given a Kalmanson metric as a matrix \( D \), one can decide whether it is an electrical Kalmanson metric in a straightforward but computationally heavy way. First one must reorder the matrix columns and rows to ensure its entries satisfy the Kalmanson inequalities, then find the unique response matrix $M$ that corresponds to a given Kalmanson metric matrix using the pseudoinverse formula, see \cite{F} (Lemma 3.1), and then  check all the circular minors of that $M$ for non-negativity. 

Here is our main result, it gives a new characterization of the Kalmanson metrics, which are the resistance metrics of circular electrical networks without using inversion of matrices.
\begin{theorem} \label{th: dual}
  Let  $D$ be a Kalmanson metric matrix with the correct order of rows and columns, then $D$ is the effective resistance matrix of a connected circular electrical network $\mathcal E$ if and only if  the matrix $\Omega_{D}$ constructed from $D$ according to the  formula \eqref{eq:omega_n,r} defines a point $X$ in $\mathrm{Gr}_{\geq 0}(n-1, 2n)$ and the Plücker coordinate $\Delta_{24\dots 2n-2}(X)$ does not vanish.
\end{theorem}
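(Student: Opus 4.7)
The plan is to reduce the theorem in both directions to the already-proved characterization of response matrices of connected circular electrical networks (Theorem \ref{about sur}), using the matrix identity $\Omega_R(\mathcal E)=\Omega(\mathcal E^{*})\,s$ of Theorem \ref{th:aboout omeganr} as a dictionary between the "resistance picture" and the "response picture" of the electrical dual.

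For the forward direction, suppose $D=R_{\mathcal E}$ for some $\mathcal E\in E_n$. Then $\Omega_D$ coincides literally with $\Omega_R(\mathcal E)$, and Theorem \ref{th:aboout omeganr} says exactly that this represents a point $X$ of $\mathrm{Gr}_{\geq 0}(n-1,2n)$ (the same point as $\Omega(\mathcal E)$) with $\Delta_{24\cdots 2n-2}(X)\neq 0$ equivalent to $\mathcal E$ being a connected circular electrical network.

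For the reverse direction, introduce the auxiliary $n\times n$ matrix $M(D)=(m_{ij})$ with
\[ m_{ij}=-\tfrac{1}{2}(d_{i,j}+d_{i+1,j+1}-d_{i,j+1}-d_{i+1,j})\qquad (i\neq j), \]
and diagonal entries forced by requiring each row to sum to zero. First I would check that $M(D)$ satisfies the three "response-type" conditions of Theorem \ref{Set of response matrices all network}: symmetry is inherited from that of $D$; off-diagonal non-positivity follows from the Kalmanson inequality \eqref{kal_1} applied to the cyclically ordered quadruple $(i,i+1,j,j+1)$, with the triangle inequality for $D$ handling the degenerate case $j\equiv i\pm 1$; the zero row-sum condition holds by construction. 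Note that the circular-minor condition of Theorem \ref{Set of response matrices all network} is \emph{not} checked directly — it will come out of the Plücker positivity hypothesis on $\Omega_D$ via Theorem \ref{about sur}.

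Next, I would invoke the matrix identity $\Omega_D=\Omega(M(D))\,s$, which is exactly Theorem \ref{th:aboout omeganr} read in reverse. Since $s$ acts on the $2n$ columns by a cyclic shift with a single sign change, it permutes Plücker coordinates cyclically: $\Delta_I(\Omega_D)=\pm\Delta_{I-1}(\Omega(M(D)))$ for every $I$. Consequently $\Omega(M(D))$ inherits total non-negativity from $\Omega_D$ and
\[ \Delta_{13\cdots 2n-3}(\Omega(M(D)))=\pm\Delta_{24\cdots 2n-2}(\Omega_D)\neq 0. \]
Theorem \ref{about sur} then produces a connected circular electrical network $\mathcal F\in E_n$ with $M_R(\mathcal F)=M(D)$. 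Finally, take $\mathcal E$ to be a connected circular electrical network whose dual is $\mathcal F$ (with the labeling convention from Theorem \ref{ken-wen}). Formula \eqref{form:xij} identifies the entries of $M_R(\mathcal E^{*})=M_R(\mathcal F)=M(D)$ with the split coefficients $\omega_{ij}=-m_{ij}$ of $R_{\mathcal E}$ in the canonical split decomposition of Theorem \ref{th:decomp}; but these are by definition also the split coefficients of $D$. Since $D$ and $R_{\mathcal E}$ are Kalmanson metrics with identical weights on the same circular split system $\mathcal S_c$, the uniqueness statement in Theorem \ref{BD} forces $R_{\mathcal E}=D$.

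The main obstacle is the bookkeeping in the second-to-last step: one must carefully verify that the column shift $s$ truly permutes Plücker coordinates cyclically with controlled signs, so that both the positivity hypothesis and the non-vanishing of $\Delta_{24\cdots 2n-2}$ transfer faithfully to $\Omega(M(D))$ in the form required by Theorem \ref{about sur}. The labeling convention used to pass from $\mathcal F$ to $\mathcal E=\mathcal F^{*}$ also needs care to match Kenyon–Wilson's dictionary precisely, but it is uniquely determined once that dictionary is fixed.
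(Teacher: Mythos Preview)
Your proposal is correct and follows essentially the same route as the paper's proof: shift $\Omega_D$ by $s^{-1}$ to land in the form \eqref{omega_eq}, use preservation of non-negativity under the cyclic shift (the paper cites \cite{L3} for this, which resolves the sign bookkeeping you flag as the ``main obstacle''), apply Theorem~\ref{about sur} to produce the dual network, and finish by comparing split-decomposition weights via Theorems~\ref{ken-wen} and~\ref{th:decomp}. The only cosmetic difference is that the paper obtains $\mathcal E$ directly from $\Omega_D$ through Lam's embedding rather than by dualizing $\mathcal F$, but the two constructions agree by Theorem~\ref{th:aboout omeganr}.
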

\begin{proof}

Necessity follows from Theorem \ref{th:aboout omeganr}.

To prove sufficiency, assume that  $\Omega_{D}$  defines a point $X$ in $\mathrm{Gr}_{\geq 0}(n-1, 2n)$ and   $\Delta_{24\dots 2n-2}(\Omega'_{D}) \neq 0$. By direct computations we conclude that the matrix $\Omega_{D}s^{-1}$ has the form \eqref{omega_eq} and $\Delta_{13\dots 2n-3}\bigr((\Omega_{D}s^{-1})'\bigl)\neq 0$. Since the action of $s$ preserves the non-negativity according to \cite{L3}, the matrix $\Omega_{D}s^{-1}$ defines a point of  $\mathrm{Gr}_{\geq 0}(n-1, 2n)$. Using the surjectivity of Lam's embedding (see Theorem  \ref{about sur}) and Theorem \ref{th:aboout omeganr},  we obtain that both  $\Omega_{D}$ and  $\Omega_{D}s^{-1} $ are associated with connected networks.  

Finally, due to Theorem \ref{th:aboout omeganr} and Theorem \ref{ken-wen} we have that the matrix $M(D)=(m_{ij}),$ where   
\[m_{ij}=-\dfrac{1}{2}(d_{i,j}+d_{i+1,j+1}-d_{i,j+1}-d_{i+1,j})\]
can be identified with the response matrix of a network $\mathcal E^{*},$  associated with  $\Omega_{D}s^{-1}.$

 It remains  to prove that $d_{ij}$ are equal to the effective resistances $R_{ij}$ of the network $\mathcal E,$ associated with $\Omega_{D}.$ Since the metrics $d_{ij}$ and $R_{ij}$ are both Kalmanson and the weights in their split decomposition are equal, $\omega_{ij}=-m_{ij}$ (see the formula \eqref{decom}) , we conclude that:
 $$d_{ij}=-\sum \limits_{i'(i, j)j'(i, j)} m_{i'j'}= R_{ij}.$$
\end{proof}
The following theorem is also a convenient test for detecting electrical Kalmanson metrics. It literally falls in our lap due to Theorems \ref{ken-wen} and \ref{th:decomp}.


\begin{theorem} \label{th dual2}
Let $D$ be a Kalmanson metric on $X$,
then $D$ is an Electric Kalmanson metric of a connected circular electrical network $\mathcal E$ if and only if  the rank of $M(D)$ is equal to $n-1$ and the circular minors of the matrix $M(D)$ are non-negative after multiplying by $(-1)^k$ where $k$ is the size of the minor.

Given this, the matrix $M(D)$ is the response matrix of the dual to the electrical network $\mathcal E$. 
\end{theorem}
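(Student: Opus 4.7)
The plan is to reduce Theorem \ref{th dual2} to Theorem \ref{Set of response matrices all network} (characterizing response matrices of circular electrical networks) combined with the duality formula in Theorem \ref{ken-wen} and the uniqueness of the split decomposition in Theorem \ref{BD}. Roughly: the defining formula $m_{ij}=-\tfrac12(d_{ij}+d_{i+1,j+1}-d_{i,j+1}-d_{i+1,j})$ is exactly the formula \eqref{form:xij} relating the resistance metric of $\mathcal E$ to the response matrix of $\mathcal E^*$, so $M(D)$ is forced to play the role of $M_R(\mathcal E^*)$.

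For the necessity direction, I would start from $D=R_{\mathcal E}$ with $\mathcal E$ a connected circular network and invoke Theorem \ref{ken-wen}: the off-diagonal entries of $M(D)$ then coincide with $x^*_{ij}$, and since both $M(D)$ and $M_R(\mathcal E^*)$ are symmetric with vanishing row sums, they are equal. Applying Theorem \ref{Set of response matrices all network} to the circular network $\mathcal E^*$ yields both the rank $n-1$ property (kernel generated by $(1,\dots,1)$) and the $(-1)^k$ sign condition on $k\times k$ circular minors. For sufficiency, I would first verify the three standing properties of Theorem \ref{Set of response matrices all network} for $M(D)$: symmetry is immediate; each row sum vanishes by construction of the diagonal entries; and the off-diagonal entries are non-positive because $\omega_{ij}\geq 0$ follows from the Kalmanson inequalities on $D$. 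Combined with the given rank and signed-minor hypotheses, Theorem \ref{Set of response matrices all network} then produces a connected circular electrical network, which I label $\mathcal E^*$, with $M_R(\mathcal E^*)=M(D)$.

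Next, I would take $\mathcal E$ to be a network whose dual (up to the shift noted in Remark \ref{KW}) is $\mathcal E^*$. Applying Theorem \ref{ken-wen} to $\mathcal E$, the entries of $M_R(\mathcal E^*)$ admit the expression \eqref{form:xij} in terms of $R_{\mathcal E}$, and comparing with the definition of $m_{ij}$ shows that the split decomposition weights $\omega_{ij}$ of $R_{\mathcal E}$ coincide with those of $D$ given by Theorem \ref{th:decomp}. Both $D$ and $R_{\mathcal E}$ are Kalmanson metrics with the same circular order, so the uniqueness of the weighted circular split system in Theorem \ref{BD} forces $D=R_{\mathcal E}$, yielding both the characterization and the final sentence identifying $M(D)$ with the response matrix of the dual network.

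The principal obstacle is bookkeeping around duality: dualization is only an involution up to a cyclic relabeling of the nodes, as in Remark \ref{KW}, so I must be careful that the circular order used to test the Kalmanson condition on $D$ is the one that matches the shifted labeling of $(\mathcal E^*)^*$. Once indices are aligned, the logical content is a clean cross-application of Theorems \ref{Set of response matrices all network}, \ref{ken-wen}, \ref{th:decomp}, and \ref{BD}, with no nontrivial computation required beyond checking that the Kalmanson inequalities are equivalent to the non-positivity $m_{ij}\leq 0$.
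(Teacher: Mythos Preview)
Your proposal is correct and follows essentially the same logical skeleton as the paper: in both directions one identifies $M(D)$ with $M_R(\mathcal E^*)$, invokes Theorem \ref{Set of response matrices all network} to pass between the rank/signed-minor conditions and the existence of a connected circular network, and then concludes $D=R_{\mathcal E}$ by matching the weights in the split decomposition (Theorems \ref{th:decomp} and \ref{BD}). The paper phrases the sufficiency step in the Grassmannian language---constructing $\Omega(\mathcal E')$, applying the shift $s$, and appealing to Theorem \ref{th:aboout omeganr} to obtain $\mathcal E$ with $\mathcal E'=\mathcal E^*$---whereas you invoke the Kenyon--Wilson duality formula \eqref{form:xij} directly; but the paper's own final step (deferred to the proof of Theorem \ref{th: dual}) is exactly your comparison of split-decomposition weights, so the two arguments coincide in substance.
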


\begin{proof}

If any $k \times k$ circular minors of the matrix $M(D)$ is positive after multiplying by $(-1)^k$  
then $M(D)$ defines a response matrix of a network $\mathcal E'$ due to Theorem \ref{Set of response matrices all network}.
Hence we conclude that $\mathcal E'$ is connected (see Theorem \ref{Set of response matrices all network}) given the condition on the rank of $M(D)$.

The matrix $\Omega( \mathcal E')$ defines a point in $Gr_{\geq 0}(n-1, 2n)$ hence $\Omega(\mathcal E')s$ does as well. According to Theorem \ref{eq:omega_n,r} 
$$\Omega(\mathcal E')s=\Omega(\mathcal E)=\Omega_{R}(\mathcal E)$$ 
for a connected network $\mathcal E$ with the resistance matrix $R_{\mathcal E}=M(D)$, moreover $\mathcal E'=\mathcal E^{*} $. It remains to prove that $R_{ij}=d_{ij}, $ it can be done as it has been explained in the proof of Theorem \ref{th: dual}.

Suppose now that $D$ comes from a circular electrical split system associated with a connected circular electrical network $\mathcal E$, then due to Theorem \ref{eq:omega_n,r}  we conclude that $\Omega_{R}(\mathcal E)s^{-1}$ defines a point in $Gr_{\geq 0}(n-1, 2n)$ associated with a connected network $\mathcal E^{*}$ and $M(D)=M_R(\mathcal E^*).$ The properties of $M(D)$ in the statement of the theorem follow from Theorem \ref{Set of response matrices all network}.
\end{proof}
A few remarks are in order.
\begin{remark}
Many statements in this paper can be extended to the connected cactus networks. In particular, one can obtain that the effective resistances  of a connected cactus network give rise to a Kalmanson pseudometric and a Kalmanson pseudometric matrix $D$ can be identified with the resistance matrix of a connected cactus networks if and only if   the matrix $\Omega_{D}$ constructed from $D$ according to the  formula \eqref{eq:omega_n,r} defines a point $X$ in $\mathrm{Gr}_{\geq 0}(n-1, 2n)$.
\end{remark}
\begin{remark}\label{nk}
In Remark 4.3, [F] S. Forcey gives an example of a non-planar network whose resistance metric is nevertheless Kalmanson. It is easy to generalise this example in the following way. Let's start with a circular planar electrical network, whose resistance metric therefore is Kalmanson, such that all the inequalities required by the Kalmanson condition are strict. This means the resistance metric is represented by a full circular planar split system. Add an edge with a variable conductance $x$ to it in such a way that it is not a circular planar anymore, as in the Figure \ref{nkal}.
It is clear from the Kirchhoff formula \ref{KF} that each effective resistance $R_{ij}$ changes as a function of $x$ according to the formula
$$R^{\text new}_{ij}=\frac{R_{ij}+r_{ij}x}{1+Tx}$$
where $r_{ij}$ and $T$ are non-negative numbers and $T$ does not depend on $i,\,j$.  Therefore, for small $x$ the Kalmanson condition for the resistance metric will hold for the new network, although it is not any longer circular planar. This means that the planar circular Kalmanson metrics are sitting far away from the boundary of the space of all Kalmanson metrics.
\end{remark}
\begin{figure}
\center
\includegraphics[width=50mm]{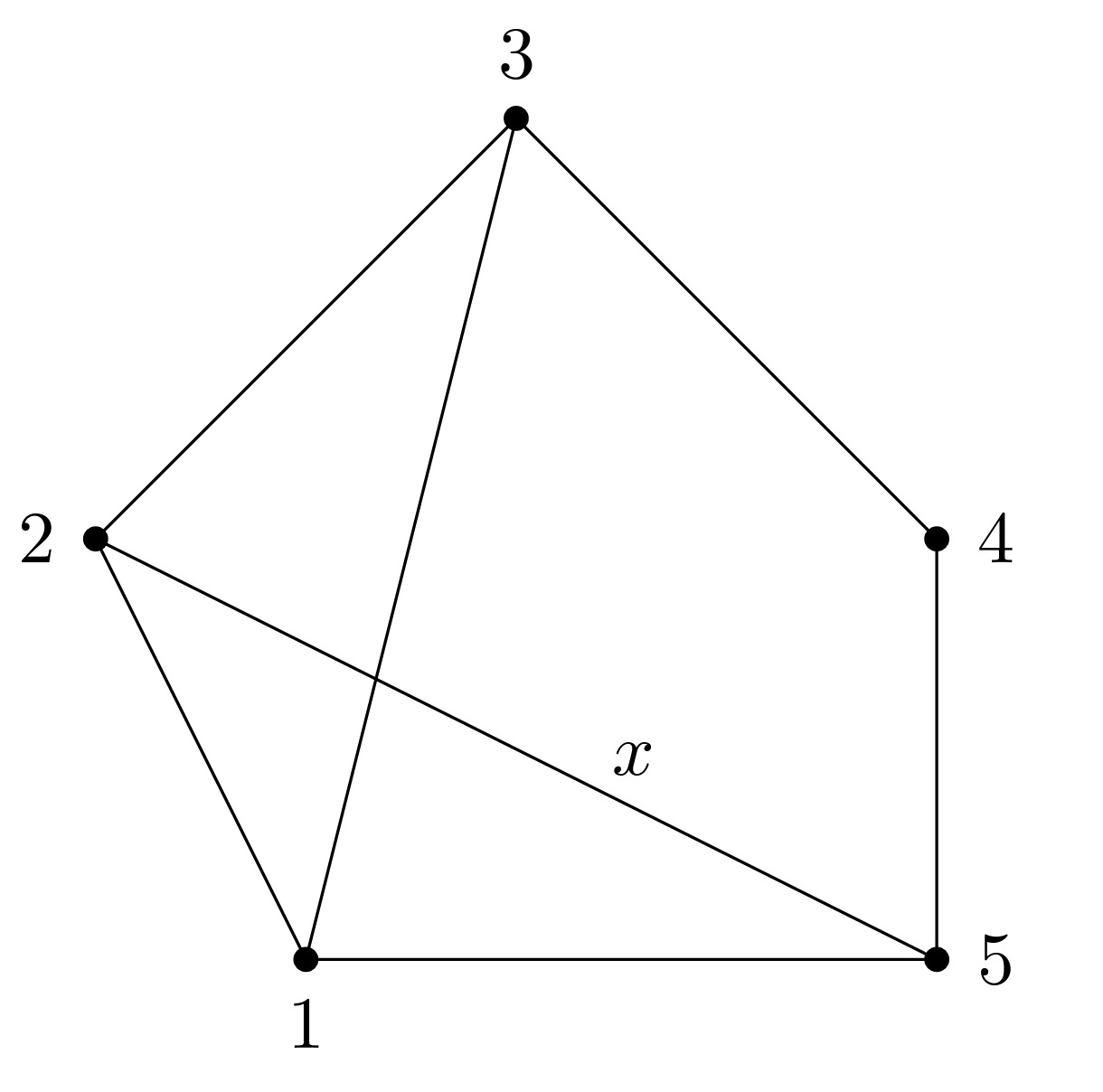}
\caption{A non-planar electrical network with nodes \{1,\dots,5\} }
\label{nkal}
\end{figure}
\begin{remark}\label{plueclerformetric}
 Let $X$ and $D$ be as in Theorem \ref{th:decomp}, then the matrix $M(D)$ defined above is the response matrix of a connected electrical network, not necessarily circular planar. Namely, it is symmetric, all non-diagonal elements are negative and the sum of the elements in any row is zero.
 The connection of the resistance matrix of this network to the original metric $D$ is an interesting question.
 
 Moreover, such a matrix defines a point 
 $$\mathrm{IG}(n-1, 2n)\subset \mathrm{Gr}(n-1, 2n)$$
 according to Theorem \ref{th:aboout omeganr}. Its Plücker coordinates are interesting invariants of the metric $D$. 
We are planning to address these questions in a future publication.
\end{remark}
\begin{remark} The resistance metric has the following important property: its
square root $\sqrt {D(i,j)}$ is $L_2$ embeddable \cite{K}, hence the electric Kalmanson metrics have that property. There is a well known condition for $L_2$ embeddability stated in terms of the minors of the Caley-Menger matrix \cite{GandC}. It is interesting to see if this condition also describes the electric Kalmanson metrics in the set of all Kalmanson metrics.
\end{remark}
\begin{figure}[h!]
\center
\includegraphics[width=70mm]{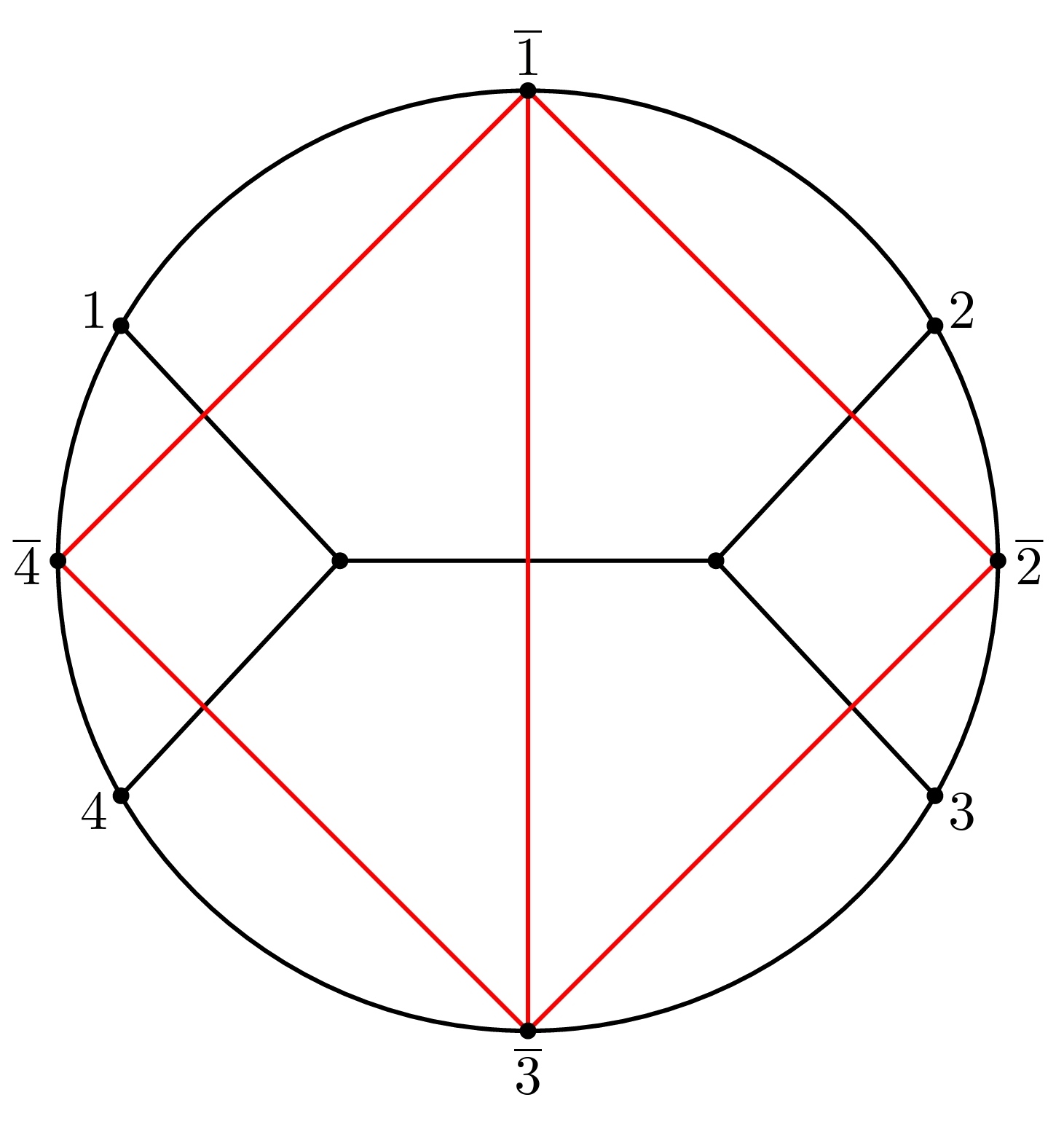}
\caption{A tree network $\mathcal E$ and its dual  network $\mathcal E^{*},$ \\ all the conductances are equal to $1$}
\label{treedaul}
\end{figure}
We will provide an example to illustrate the theorem above.
\begin{example} 
Let $T$ be a tree with four leaves as in the Figure \ref{treedaul} with the weights of edges all equal to one. The resistance metric matrix of the tree is 
\[D=
\begin{pmatrix}
0& 3 & 3& 2 \\
3&  0& 2& 3\\
3& 2&  0& 3 \\
2&3&3&0
\end{pmatrix}
\]
It coincides with the matrix of the geodesic distance for trees. Its split decomposition is as follows as one sees easily
\[D=D_{S_{12}}+D_{S_{13}}+D_{S_{14}}+D_{S_{23}}+D_{S_{34}}+D_{S_{43}}=D_{2|134}+D_{23|14}+D_{1|234}+D_{3|124}+D_{4|123}\]
The response matrix of the dual network multiplied by $-1$ is
\[
\begin{pmatrix}
-3& 1& 1& 1 \\
1&  -2& 1& 0\\
1& 1& -3 & 1 \\
1&0&1&-2
\end{pmatrix}
\]
It coincides with the incidence matrix of the dual graph since there are no internal vertices.
Our correspondence between the splits and pairs of numbers is given in the table
\\
\[\begin{tabular}{ |c|c|c|c|c |}
 $(\bar1\bar2)$& $(\bar1\bar3)$& $(\bar1\bar4)$& $(\bar2\bar3)$&$ (\bar3\bar4)$ \\
 \hline
 $(2|134)$ & $(14|23)$&$ (1|234)$&$(3|124)$&$(4|123)$ \\
\end{tabular}\]
\\
matching the coefficients in the split decomposition of $D$ and the coefficients of the response matrix of the dual network.

\end{example}


\section{Reconstruction of network topology} \label{sec:rec}
As we mentioned in the introduction one of the important problems in applied mathematics can be formulated as follows:
\begin{problem} \label{black-box}
  Suppose we are given a matrix $D=(d_{ij})$, whose entries are the distances between $n$ terminal nodes of an unknown weighted graph $G$. It is required to recover the graph $G$  and the edge weights which are consistent with the given  matrix.       
\end{problem}
If the terminal nodes  are the boundary vertices of a circular electrical networks $\mathcal E $ and $D=R_{\mathcal E}$, then due to Proposition \ref{th: about inverse resp} we conclude that   Problem \ref{black-box} can be identified with the {\it black box problem}, see \cite{CIW}.
It is also known as the discrete Calderon problem or the discrete inverse electro impedance tomography problem.

If  a graph $G$ is an unknown tree $T$ and the entries of $D_T=(d_{ij})$ are equal to the weights of the paths between vertices of $T$, then Problem \ref{black-box} can be identified with the minimal tree reconstruction problem, which plays an important role in phylogenetics \cite{HRS}.
\begin{definition}
    We will call a  matrix $D=(d_{ij})$ a {\it tree realizable}, if there is a tree $T$ such that 
    \begin{itemize}
        \item $D=D_T$   i.e. $d_{ij}$ are  equal to the weights of the paths between terminal nodes of $T$;
        \item  a set of terminal nodes contains all leafs of $T;$
        \item  there are no vertices of degree $2.$ 
    \end{itemize}
    
\end{definition}

\begin{theorem} \cite{CR}, \cite{HY} \label{minimaltree}
 If a  matrix $D$ is tree realizable, then there is an unique  tree $T$ such that $ D=D_T.$

\end{theorem}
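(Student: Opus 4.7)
The plan is to prove uniqueness (existence being built into tree-realizability) by induction on the number $n$ of terminal nodes, reconstructing the tree topology and edge weights directly from $D$ at each stage. The base case $n = 2$ is immediate: the only tree with two terminals and no degree-$2$ vertices is a single edge of weight $d_{12}$. For $n = 3$, the no-degree-$2$ condition forces $T$ to be a star with one internal vertex $v$ of degree $3$, and the three edge weights $w(iv) = \tfrac12(d_{ij} + d_{ik} - d_{jk})$ are immediately forced by $D$.

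For the inductive step, the key idea is to identify a \emph{cherry} — a pair of terminal leaves $\{i, j\}$ sharing a common neighbor $v$ — from $D$ alone. My proposed characterization is that $\{i, j\}$ is a cherry in any tree realizing $D$ if and only if the difference $d_{ik} - d_{jk}$ is the same for every other terminal $k$: in any such tree the paths $i \to k$ and $j \to k$ share the subpath from $v$ to $k$, so $d_{ik} - d_{jk} = w(iv) - w(jv)$ is independent of $k$. Existence of at least one cherry in any tree with $n \geq 3$ leaves and no degree-$2$ vertices is a standard combinatorial fact: take any leaf of the subtree spanned by the internal vertices; in the original tree it must be adjacent to at least two leaves.

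Once a cherry $\{i, j\}$ is detected, the weights $w(iv) = \tfrac12(d_{ij} + d_{ik} - d_{jk})$ and $w(jv) = \tfrac12(d_{ij} - d_{ik} + d_{jk})$ are determined by $D$, with independence from the auxiliary $k$ guaranteed by tree-realizability. I then form a reduced matrix $D'$ on the terminal set $(V \setminus \{i, j\}) \cup \{v\}$ via $d_{vk} := d_{ik} - w(iv)$; this $D'$ is realized by the pruned tree $T'$ obtained by deleting the leaves $i$ and $j$ and, if necessary, declaring $v$ a terminal. The induction hypothesis yields uniqueness of $T'$, and reattaching $i, j$ with the forced edge weights reconstructs $T$ uniquely.

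The main obstacle I foresee is managing the no-degree-$2$ constraint under reduction. If $v$ had degree exactly $3$ in $T$, it becomes a degree-$1$ leaf in $T'$, which is harmless. But if $v$ had higher degree in $T$, removing the two leaf edges drops its degree by $2$; when the original degree was $4$ this creates a degree-$2$ vertex, violating the hypothesis of the induction. This is handled either by a contraction step that absorbs any such degenerate vertex into a single weighted edge (preserving all pairwise distances among the remaining terminals) or by promoting $v$ to a terminal — permissible since the definition allows terminal sets strictly larger than the leaf set. Checking that each such bookkeeping operation produces a tree-realizable $D'$ to which the induction applies, and that the subsequent reassembly recovers a unique $T$, is the technical crux; once this is in place, the induction step forces $T$ uniquely from $D$ at every stage.
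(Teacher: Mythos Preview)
The paper does not supply a proof of this theorem; it is quoted from \cite{CR} and \cite{HY} as a known result, so there is no in-paper argument to compare against. Your cherry-picking induction is the classical route (it is essentially what underlies both cited references and the neighbor-joining paradigm in phylogenetics), and the overall strategy is sound.

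Two places deserve tightening. First, your ``if and only if'' for cherries is not literally an equivalence under the paper's definition, which allows the terminal set to properly contain the leaf set: if $j$ is a non-leaf terminal and $i$ is a leaf adjacent to $j$, then $d_{ik}-d_{jk}=d_{ij}$ for every other terminal $k$, yet $\{i,j\}$ is not a cherry in your sense. This does not break the reconstruction --- your formulas then return $w(jv)=0$, signalling $v=j$, and you simply prune the single leaf $i$ --- but the claimed characterization should be stated more carefully. Second, promoting $v$ to a terminal does not, by itself, rescue a degree-$2$ vertex under the paper's definition (the no-degree-$2$ clause is stated for all vertices, not just inner ones); to make the induction go through cleanly you should either prove the stronger statement allowing degree-$2$ \emph{terminal} vertices (uniqueness still holds there, since the distances $d_{vk}$ pin down the incident edge weights), or always contract and track that the suppressed vertex, if it was a terminal, is recoverable from $D'$. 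You already flag this as the technical crux; once it is handled explicitly the induction closes.
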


It is not difficult to see that if the graph of a circular electrical network $\mathcal E $ is a tree $T$ and its  boundary nodes contain all tree leafs, then $R_{\mathcal E}=D_{\bar T}$ where $\bar T$ and $T$ are identical as unweighted trees and the weights of the corresponded edges are reciprocal. This observation allows us to use in phylogenetics the algorithm for reconstruction of electrical networks \cite{L} from a given resistance matrix.  Our reconstruction method is different from the methods suggested in \cite{F}, \cite{FS}. 

\begin{definition}
    The median graph of a circular network $\mathcal E $ with the graph $\Gamma$ is the graph $\Gamma_M$ whose internal vertices are the midpoints of the edges of $\Gamma$ and two internal vertices are connected by an edge if the edges of the original graph $\Gamma$ are adjacent in one face. The boundary vertices of $\Gamma_M$ are defined as the intersection of the natural extensions of the edges of $\Gamma_M$  with the boundary circle.  Since the interior vertices of the median graph have degree four, we can define the strands of the median graph as  the  paths which always go straight through any degree four vertex. 

The strands naturally define a permutation $\tau(\mathcal E)$ on the set of points $\{1, \dots, 2n\}.$
\end{definition}

\begin{figure}[h!]
    \centering
    \includegraphics[width=0.4\textwidth]{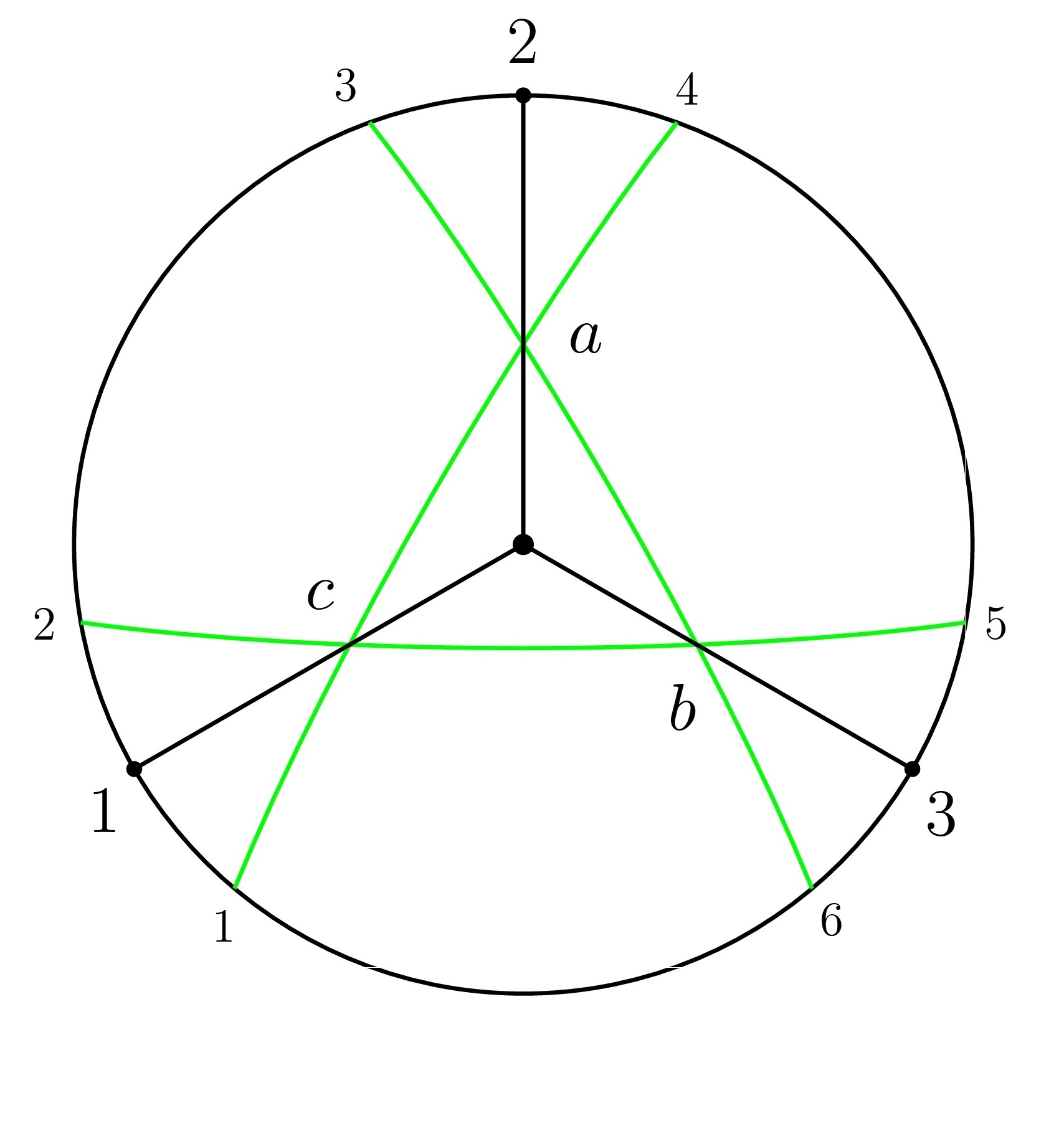}
    \caption{Star-shape network, its median graph and the strand permutation $\tau(\mathcal E)=(14)(36)(25)$ }
    \label{fig:triangle}
\end{figure}
\begin{theorem} \cite{CIW} \label{strand}
     A circular electrical network is defined uniquely up to electrical transformations by its strand permutation.
\end{theorem}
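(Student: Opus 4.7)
The plan is to establish two complementary facts: \emph{(i)} the strand permutation $\tau(\mathcal E)$ is invariant under each of the five electrical transformations of Figure \ref{fig:el_trans}; and \emph{(ii)} any two circular planar electrical networks with the same strand permutation are connected by a sequence of these transformations. Together they give the claim.

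For \emph{(i)}, I would go through each move locally on the medial graph. The series and parallel reductions correspond to smoothing two segments of a single strand or merging a bigon into a single arc; in both cases every strand keeps the same pair of boundary endpoints. The Y--$\Delta$ move replaces three strands meeting in a Y-configuration by three strands meeting in a $\Delta$-configuration, and a direct comparison of the two pictures shows that the induced matching on the six endpoints of those strands is the same, so $\tau$ is unchanged. Removing a loop or a pendant subnetwork does not touch any strand reaching the boundary. This part is a routine case-check.

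For \emph{(ii)}, I would introduce the notion of a \emph{critical} (or reduced) network: one in which no electrical transformation decreases the total number of edges. A standard descent argument shows that every circular planar network reduces by electrical moves to a critical one. The central claim is then that in a critical network, no strand self-intersects and no two strands cross more than once; the strands therefore form a non-crossing-up-to-a-single-crossing chord diagram on $2n$ boundary points, and this chord diagram is exactly the strand permutation. Conversely, any fixed-point-free involution arising as such a chord diagram is realized by a critical network (via the standard wiring-diagram construction), and the critical realization is unique up to isotopy of the medial graph. Combining with \emph{(i)}, given two networks with the same strand permutation one reduces each to a critical representative, the two critical networks must coincide, and the original networks are therefore electrically equivalent.

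The main technical obstacle is the \textbf{no-double-crossing property} of critical networks: if two strands crossed twice, one must locate an innermost such double crossing and exhibit an explicit sequence of Y--$\Delta$ and series--parallel moves (the medial analogue of a Reidemeister-II uncrossing) that eliminates it, contradicting criticality. Handling all the local configurations that can occur inside the bigon bounded by the two crossings, and checking that the uncrossing procedure terminates and stays within the class of circular planar networks, is the delicate combinatorial step where essentially all the work of the Curtis--Ingerman--Morrow / Colin de Verdi\`ere theory is concentrated.
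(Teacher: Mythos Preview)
First, note that the paper does not supply its own proof of this theorem: it is quoted from \cite{CIW} without argument, so there is no in-paper proof to compare against. Your outline is indeed the classical Curtis--Ingerman--Morrow / Colin de Verdi\`ere strategy, and part \emph{(i)} together with the reduction to a critical representative is fine.

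There is, however, a genuine gap in part \emph{(ii)}. You assert that ``the critical realization is unique up to isotopy of the medial graph'' and then conclude that ``the two critical networks must coincide.'' This is false as stated: the star and the triangle on three boundary nodes are both critical, share the same strand permutation, and are nonetheless distinct networks with non-isomorphic medial graphs. The correct statement---recorded in this paper as Theorem~\ref{electrominim}, again cited from \cite{CIW}---is that two critical networks with the same strand permutation are connected by a sequence of $Y$--$\Delta$ moves \emph{only}. Proving this is the actual heart of the theorem and is \emph{not} a consequence of the no-double-crossing property you discuss in your last paragraph. That property only guarantees that each critical network yields a reduced strand diagram; it does not show that two reduced diagrams with the same boundary matching are related by Reidemeister-III-type moves. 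Establishing the latter requires a separate induction (for instance, sliding a chosen strand to a canonical outermost position via successive $Y$--$\Delta$ moves and then peeling it off), and this step is precisely where the work in \cite{CIW} lies. Without it, your argument shows that equivalent networks have equal $\tau$, but not the converse.
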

 Denote by $A_i$ the columns of the matrix $\Omega_{ R}(\mathcal E)$ and define the column permutation $g(\mathcal E)$ as follows:
    $g(\mathcal E)(i)=j,$ if  $j$  is the minimal number such that   $A_i \in \ \mathrm{span}(A_{i+1}, \dots, A_{j}  ),$ where the indices are taken modulo $2n$.
\begin{theorem}\label{permutationj} \label{th:aboutstradsper}
    The following holds $$g(\mathcal E)+1=\tau(\mathcal E).$$ 
\end{theorem}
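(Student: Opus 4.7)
The strategy is to interpret $g(\mathcal E)$ as a Grassmannian invariant and to match it against $\tau(\mathcal E)$ through the combination of Theorem \ref{th:aboout omeganr} and the standard correspondence between positroid cells and strand diagrams of plabic graphs (played here by medial graphs of electrical networks). First, I would observe that the defining condition ``$A_i \in \mathrm{span}(A_{i+1}, \ldots, A_j)$'' is preserved under row operations on $\Omega_R(\mathcal E)$, since such operations act simultaneously on all columns and preserve linear dependencies. Consequently $g(\mathcal E)$ depends only on the point $[\Omega_R(\mathcal E)] \in \mathrm{Gr}(n-1, 2n)$, not on the chosen matrix representative.

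Next, using Theorem \ref{th:aboout omeganr}, I would replace $\Omega_R(\mathcal E)$ by $\Omega(\mathcal E^{*})\,s$ and track the columns. Right multiplication by the cyclic shift $s$ sends the columns $B_1, \ldots, B_{2n}$ of $\Omega(\mathcal E^{*})$ to the columns $A_i = \pm B_{i-1}$ of $\Omega(\mathcal E^{*})\,s$ (cyclically in $i$, with a sign $(-1)^n$ on the wrap-around which is irrelevant for span conditions). The defining condition for $g(\mathcal E)(i)$ therefore becomes ``$B_{i-1} \in \mathrm{span}(B_i, \ldots, B_{j-1})$'', which introduces an explicit index shift by $1$ relative to the analogous minimal-span permutation $\hat g$ read off from $\Omega(\mathcal E^{*})$. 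Combining this with the identification of $\hat g$ with the strand permutation $\tau(\mathcal E^{*})$ of the medial graph of $\mathcal E^{*}$---a specialization to the electrical setting of Postnikov's dictionary between positroid cells and strand diagrams, as developed in \cite{L}, \cite{BGKT}, and \cite{CIW}---together with the relation between the medial graphs of $\mathcal E$ and its dual $\mathcal E^{*}$ (they coincide as unlabeled planar graphs, with boundary labelings differing by the cyclic shift dictated by the convention ``the first node of $\mathcal E^{*}$ lies between the first and second node of $\mathcal E$''), yields the desired identity $g(\mathcal E)(i) + 1 = \tau(\mathcal E)(i)$.

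The main obstacle is the careful bookkeeping of cyclic shifts: the shift by $s$ on columns, the passage from $\mathcal E$ to $\mathcal E^{*}$ on medial graphs, and the signs on the wrap-around all contribute to the final formula, and verifying that they combine to produce exactly the $+1$ in the statement (rather than a larger or incorrect shift) is where the most care is required. The underlying identification between column-span permutations of matrices representing positroid cells and strand permutations of their medial graphs is standard in the theory of the totally non-negative Grassmannian, but translating the linear-algebraic condition ``$B_{i-1} \in \mathrm{span}(B_i, \ldots, B_{j-1})$'' into the combinatorial statement that the strand entering at the appropriate boundary point of the medial graph exits at a prescribed location is the key technical step that makes the correspondence precise in our electrical-network setting.
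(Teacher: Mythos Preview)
Your proposal and the paper's proof rest on the same core fact: the Postnikov--Lam dictionary (Proposition~5.17 in \cite{L}) identifying the decorated permutation of a positroid point with the strand permutation of the associated plabic/medial graph. The paper simply invokes this directly, since by Theorem~\ref{th:aboout omeganr} the matrices $\Omega_R(\mathcal E)$ and $\Omega(\mathcal E)$ define the \emph{same} Grassmannian point, so $g(\mathcal E)$ may equally well be read off from $\Omega(\mathcal E)$ and Lam's result applies immediately. Your detour through $\Omega(\mathcal E^*)s$ is therefore unnecessary: you pass from $\mathcal E$ to $\mathcal E^*$ and back, accumulating two cyclic shifts that ought to cancel, leaving you exactly where a direct application of Lam's proposition would have put you.

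There is also a genuine gap in the bookkeeping. You assert $\hat g = \tau(\mathcal E^*)$ via the Postnikov dictionary, but that identification, specialised to the Lam embedding of $\mathcal E^*$, is precisely the statement $\hat g + 1 = \tau(\mathcal E^*)$ you are trying to prove (for $\mathcal E^*$ in place of $\mathcal E$). So either the argument is circular, or your claimed identification is off by one. Indeed, the column shift $A_i=\pm B_{i-1}$ gives $g(\mathcal E)=\sigma\,\hat g\,\sigma^{-1}$ (a conjugation, not an additive shift), and the medial-graph relabeling between $\mathcal E$ and $\mathcal E^*$ contributes another conjugation; these conjugations compose to the identity, not to an additive $+1$. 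The $+1$ must come from the dictionary itself, which is exactly Lam's Proposition~5.17---the input the paper cites directly. You acknowledge this as ``the main obstacle'' without resolving it; the resolution is to drop the dual-network detour and apply Lam's result to $\Omega(\mathcal E)$ straightaway.
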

     \begin{proof}
         This result follows from the fact that $\Omega_{ R}(\mathcal E)$ is an explicit parametrization of the Lam embedding, see Proposition $5.17$ \cite{L}. 
     \end{proof}
\begin{definition}
A circular electrical network is called minimal if the strands of its median graph do not have self-intersections; any two strands intersect at most one point and the median graph has no loops or lenses, see the Figure \ref{fig:loop}.
\end{definition}


\begin{figure}[h!]
    \centering
    \includegraphics[width=0.3\textwidth]{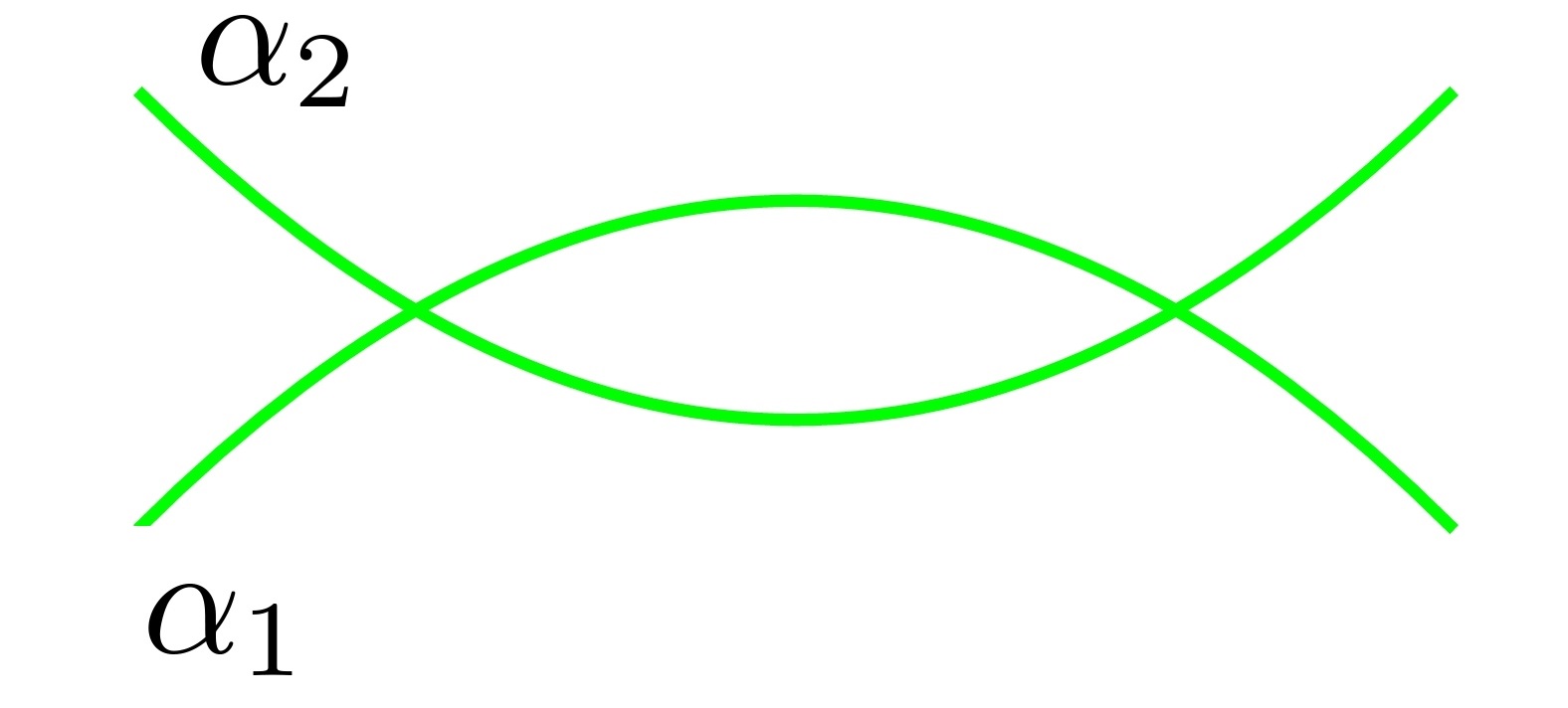}
    \caption{A lens obtained  by the intersection of strands  $
\alpha_1$ and  $
\alpha_2$ }
    \label{fig:loop}
\end{figure}

Based on Theorem \ref{permutationj} we suggest the following reconstruction algorithm (see Example \ref{ex:rec-ex}):
\begin{itemize}
    \item For a given matrix $R_{\mathcal E}$ construct the matrix $\Omega_{ R}(\mathcal E);$
    \item Using $\Omega_{ R}(\mathcal E)$ calculate a strand permutation $\tau(\mathcal E)$;
    \item The permutation $\tau(\mathcal E)$ defines a strand diagram, which can be transformed to a median graph of a \textit{minimal} circular electrical network $\mathcal E$ using the procedure described in   \cite{CIW}. 
    \item From the median graph we recover the  network $\mathcal E$ as in \cite{CIW} or \cite{F}.
\end{itemize}

\begin{theorem} \cite{CIW}\label{electrominim}
Any circular electrical network is equivalent to a minimal network. 

Any two minimal circular electrical networks which share  
the same response matrix, and hence the same effective resistance matrix, can be converted to each other only by the star-triangle transformation.  As a consequence we obtain that  any two equivalent minimal circular electrical networks have the same number of edges which is less or equal than $\frac{n(n-1)}{2}$.

If two minimal circular electrical networks are equivalent,  they have 
the same strand permutation.
\end{theorem}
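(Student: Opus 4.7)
The plan is to work entirely in the language of medial graphs and strand diagrams, leveraging Theorem \ref{strand} which already gives the bijection between equivalence classes of circular electrical networks and their strand permutations, together with Theorem \ref{gen_el_1} and Theorem \ref{Set of response matrices all network} on how the response matrix controls equivalence. The crucial observation is that each of the five local electrical transformations has a simple interpretation in the medial picture and that the strand permutation is a total invariant of the equivalence class.

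For the first assertion I would proceed by induction on a monovariant, namely the total number of strand crossings inside the disk. Starting from an arbitrary $\mathcal E$, I would examine the medial graph of $\mathcal E$ and locate a minimal ``defect'': either a loop (a strand crossing itself), or a lens (two strands meeting twice bounding an innermost bigon). In each case the innermost defect is a local configuration to which one of the four reducing moves (series, parallel, pendant removal, loop removal) applies directly: an innermost lens consists of a parallel pair or a series pair of conductors after possibly a star-triangle adjustment, while a loop corresponds to a pendant edge. Performing the corresponding electrical transformation strictly decreases the crossing count of the medial graph. Iterating this procedure terminates in a network with no loops, no lenses, and no self-intersections, i.e., a minimal one.

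For the second assertion, suppose $\mathcal E_{1}$ and $\mathcal E_{2}$ are two minimal networks with $M_{R}(\mathcal E_{1}) = M_{R}(\mathcal E_{2})$. By Theorem \ref{Set of response matrices all network} they are equivalent, so some finite sequence of electrical transformations relates them. Among the five transformations, only the star-triangle move preserves minimality: the other four strictly alter the crossing count of the medial graph when applied in a way that does not immediately reverse itself, so any intermediate step using them would force an inequality on the number of edges between $\mathcal E_{1}$ and $\mathcal E_{2}$. The formal version of this dichotomy is that the star-triangle move is the only one that acts on the strand diagram by a braid-type Reidemeister III on three strands meeting transversally, without creating or destroying a lens; hence the transformation sequence between two minimal representatives can be reduced to star-triangle moves only. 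The bound $\binom{n}{2}$ is then immediate: in a minimal medial diagram each of the $n$ strands meets every other strand at most once, and edges of the network correspond bijectively to crossings of the medial graph.

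The third assertion then follows at once, since a direct local inspection of the medial picture shows that every one of the five electrical moves (and in particular the star-triangle move) leaves the boundary behaviour of every strand unchanged, so the strand permutation $\tau(\mathcal E)$ is an invariant of the equivalence class. The main technical obstacle in this plan is the second step: one must verify that the reduction procedure really can be arranged so that every reducing move preserves the well-formed structure of the medial graph and that the ``only star-triangle'' clause can be isolated from the general electrical-transformation calculus. This is where the precise local analysis of lenses, in particular distinguishing an innermost bigon that admits parallel/series reduction from one that only opens up after a preparatory star-triangle move, has to be carried out carefully using the combinatorics of strand permutations developed in \cite{CIW}.
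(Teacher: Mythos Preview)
The paper does not actually prove this theorem: it is quoted from \cite{CIW} and followed only by the one-sentence informal remark that ``removing all the loops, the internal vertices of degree $1$ and reduction of all the parallels and the series any network can be converted into a minimal network.'' So there is no proof in the paper to compare your proposal against.

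Judged on its own, your sketch of the first and third assertions and of the edge bound $\binom{n}{2}$ is correct and is the standard approach from \cite{CIW}: reduce an innermost lens or loop by the appropriate local move, and observe that all five moves leave the boundary matching of strands untouched while edges of $\mathcal E$ are exactly the crossings of the medial graph.

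The second assertion, however, has a real gap that your monovariant argument does not close. You argue that because the four non-star-triangle moves change the crossing count, they cannot appear in a sequence between two minimal networks. This does not follow: the sequence of electrical transformations furnished by Theorem~\ref{Set of response matrices all network} is allowed to pass through non-minimal intermediate networks, so one can perfectly well perform a series expansion, wander around, and later undo it. Counting crossings only tells you the sequence cannot consist \emph{solely} of non-star-triangle moves in one direction; it says nothing about mixed sequences. What is actually needed is the combinatorial fact (a Matsumoto--Tits type statement for pseudoline arrangements) that any two \emph{reduced} strand diagrams realizing the same fixed-point-free involution on $\{1,\dots,2n\}$ are connected by Reidemeister~III moves alone. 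This is precisely the technical core of the argument in \cite{CIW}, and it requires an independent induction on the diagram rather than on the transformation sequence; you correctly flag this as the obstacle, but the sketch as written does not supply it.
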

Informally, the theorem says that removing all the loops, the internal vertices of degree $1$ and reduction of all the parallels and the series  any network can be converted into a minimal network.
   
Let us compare the last result with Theorem  \ref{minimaltree}.
\begin{theorem} \label{tomin}
Let $\mathcal E$ be a minimal electrical network whose graph is a tree $T$, then such a tree is unique.
   
  Let $\mathcal{E}(T, \omega)$ be a circular planar electrical network on a tree $T$ such that all its leaves are nodes and there are no inner vertices of degree $2$, then $\mathcal{E}(T, \omega)$ is minimal.
\end{theorem}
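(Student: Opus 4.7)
\textbf{Part 1.} Assume $\mathcal{E}_1, \mathcal{E}_2$ are equivalent minimal electrical networks whose underlying graphs are trees $T_1, T_2$. Minimality precludes every edge-reducing transformation, so in particular no series reduction (no degree-two inner vertex) and no non-node pendant reduction (every leaf is a node) is available in either network. Hence each $T_i$ has no vertex of degree two, and the hypotheses of Theorem \ref{minimaltree} are satisfied. Now in a tree network the effective resistance between two nodes is simply the sum of the edge resistances along the unique tree-path connecting them, so the common resistance matrix of $\mathcal{E}_1$ and $\mathcal{E}_2$ is simultaneously a tree-metric for $T_1$ and $T_2$. The uniqueness conclusion of Theorem \ref{minimaltree} then forces $T_1=T_2$ as weighted trees, so $\mathcal{E}_1=\mathcal{E}_2$.

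\textbf{Part 2.} The plan is to verify the three defining conditions of minimality by describing the strands of the medial graph of $\mathcal{E}(T,\omega)$ explicitly. Embed $T$ in the closed disk so that its $n$ leaves lie on the boundary circle; the complement of $T$ then decomposes into $n$ open faces $F_{i,i+1}$, where $F_{i,i+1}$ is bounded by the arc $(\ell_i,\ell_{i+1})$ between two consecutive leaves together with the unique tree-path $P_{i,i+1}$ in $T$ joining them. The central claim is that the medial graph has exactly $n$ strands, one per face: the strand $s_{i,i+1}$ begins at the boundary point $p_i^{L}$ immediately counterclockwise of $\ell_i$, ends at $p_{i+1}^{R}$ immediately clockwise of $\ell_{i+1}$, and passes through precisely the midpoints of the edges of $P_{i,i+1}$. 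This follows from the straight-through rule at each midpoint: at $M_e$, with $e$ shared between two adjacent faces $F$ and $F'$, the four neighbours split into two pairs, each lying on one side of $e$ and hence associated with one of the two adjacent faces; tracing the pair on the $F_{i,i+1}$-side along the boundary of $F_{i,i+1}$ from $\ell_i$ to $\ell_{i+1}$ assembles $s_{i,i+1}$.

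The combinatorial heart of the argument is the claim that any two distinct paths $P_{i,i+1}$ and $P_{j,j+1}$ share at most one edge of $T$. Suppose they shared two consecutive edges along a sub-path $v_0 - v_1 - v_2$. Since $v_1$ is an inner vertex it has degree at least three, so it admits an additional incident edge $f$ leading to a subtree $T_f$ of $T$. The no-degree-two hypothesis forces $T_f$ to contain at least one leaf of $T$: iterated passage through $T_f$ along degree-two vertices would contradict the hypothesis, so the walk must terminate at a genuine degree-one vertex. In the planar embedding, the leaves of $T_f$ occupy a contiguous arc of the boundary, and a short case analysis on the cyclic order of the three edges $v_0 v_1$, $v_1 v_2$, $f$ at $v_1$ shows that at most one consecutive pair of boundary leaves can have its tree-path traverse the sub-path $v_0 - v_1 - v_2$, contradicting the assumed existence of two distinct such pairs.

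Granted this, any two distinct strands share at most one midpoint of the medial graph, so they cross at most once; each strand is a simple path with two distinct boundary endpoints, so no strand is a closed loop; and the absence of double crossings rules out lenses. These are precisely the defining conditions of minimality for $\mathcal{E}(T,\omega)$. The principal obstacle is the strand–face bijection of the second paragraph, whose rigorous verification requires combining the local straight-through rule at each midpoint with a global argument about the planar embedding and cyclic orders at each vertex of $T$.
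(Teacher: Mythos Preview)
Your proof is correct and follows the same approach as the paper's. For Part~1 both you and the paper reduce to Theorem~\ref{minimaltree} via tree-realizability of the resistance matrix (you are more explicit in deriving the no-degree-two and all-leaves-are-nodes hypotheses from minimality); for Part~2 the paper's argument is exactly your strand--face correspondence---each strand traces the tree-path bounding a face between consecutive boundary nodes, and the absence of inner degree-two vertices forces any two such paths to share at most one edge---stated in two sentences with a reference to Figure~\ref{fig:treepath}, whereas you spell out the cyclic-order analysis at~$v_1$.
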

\begin{proof}
Indeed, it is easy to see that $R_T$ is a tree realizable by $\bar T$. Therefore, $T$ is unique by Theorem  \ref{minimaltree}. This proves the first statement.

To prove the second statement, it is enough to verify that the median graph $T_M$ of $\mathcal{E}(T, \omega)$ has no lenses. Indeed, each strand of $T_M$ corresponds to a simple path between nodes in the original graph, see the Figure \ref{fig:treepath}. Since $T$ has no inner vertices of degree $2$ these paths can intersect at most once. Therefore, $T_M$ has no lenses and $\mathcal{E}(T, \omega)$ is minimal.
\end{proof}
  \begin{figure}[h!]
\center
\includegraphics[width=100mm]{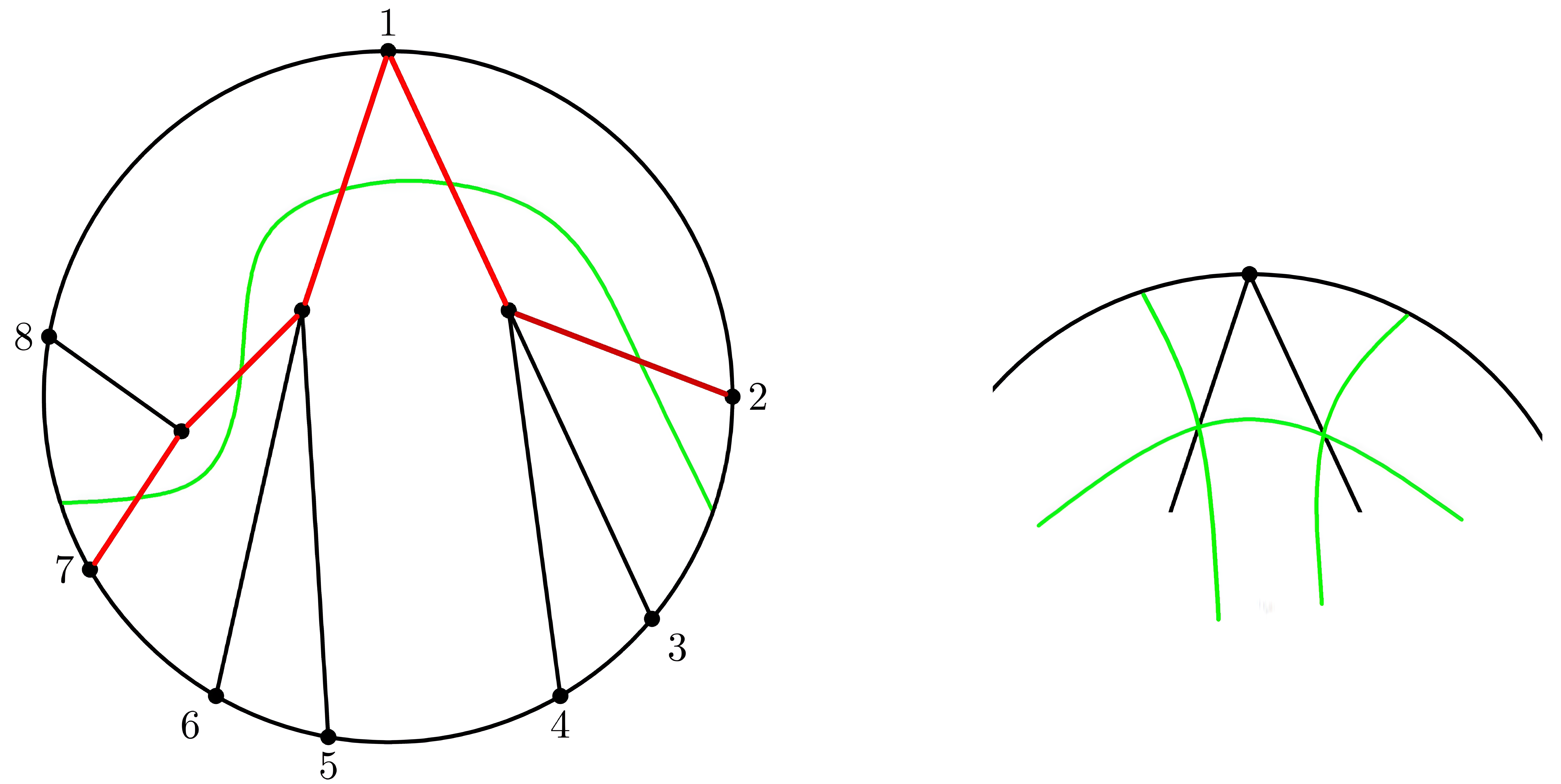}
\caption{On the left: A path which corresponds to a strand; On the right: the stands around a node of the degree $2$}
\label{fig:treepath}
\end{figure}
We propose  the following  algorithm for reconstruction of the minimal tree for a given tree metric $D_T$ based on Theorem \ref{tomin}:
\begin{itemize}
    \item Do all steps of the algorithm described above to obtain  a minimal network $\mathcal E$ such that $R_{\mathcal E}=D_T$;
    \item Transform $\mathcal E$ to a minimal tree by a sequence of the star-triangle transformation. 
\end{itemize}
Heuristically, by (\cite{Bu}, Theorem $1$), we assume that the second step can be performed monotonically, converting all triangles  into stars.
\begin{remark}
    A more advanced technique called the chamber ansatz when applied to $\Omega_{ R}(\mathcal E)$ gives an algorithm for recovering not only the topology of the network but the weights of the edges as well. This method is described  in \cite{Ka}.  
\end{remark}

We will illustrate our algorithm by an example.

\begin{figure}[H]
    \centering
    \includegraphics[width=1.2\textwidth]{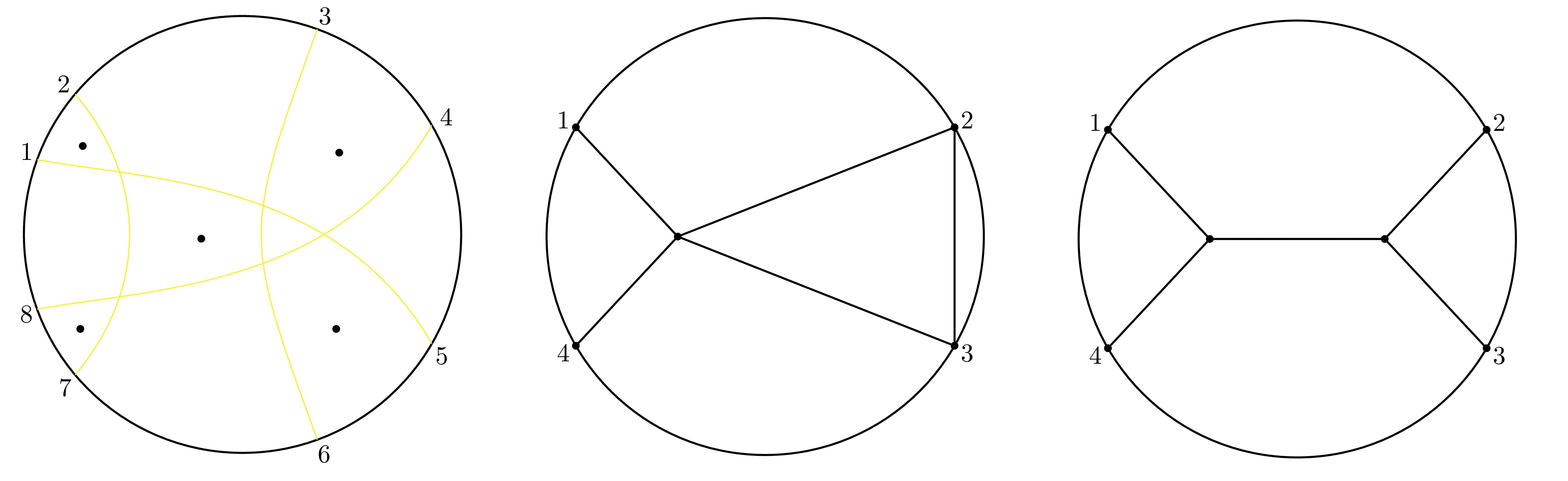}
    \caption{ An example of a reconstruction of a network topology}
    \label{fig:recon}
\end{figure}
\begin{example} \label{ex:rec-ex}
    Consider a dissimilarity matrix
    \[D=
\begin{pmatrix}
0& 3 & 3& 2 \\
3&  0& 2& 3\\
3& 2&  0& 3 \\
2&3&3&0
\end{pmatrix}
\]
Then the matrix $\Omega'_{ D}$ has the form
\[\Omega'_D=
\begin{pmatrix}
1& 3 & 1& 1 & 0 & -1& 0 &1 \\
0& 1 & 1& 2 & 1 & 1& 0 &0\\
0& -1 & 0& 1 & 1 & 3& 1 &1 
\end{pmatrix}
\]
\end{example}
and satisfies the Theorem \ref{th: dual}, therefore  $\Omega'_{ D}=\Omega'_R(\mathcal E)$ for a network $\mathcal E \in E_4.$

By direct computations we verify that $g(\mathcal E)=
[4\,6\,5\,7\,8\,2\,1\,3]$ in the one window notation as it is used in \cite{L}, therefore the strand permutation $\tau(\mathcal E)$ is $\tau(\mathcal E)=(15)(27)(36)(48).$ This  strand permutation defines a minimal network as it is shown in the Figure \ref{fig:recon}, which can be transformed to a tree by applying one star-triangle transformation. 
\

\end{document}